\begin{document}
\baselineskip = 16pt
\newcommand \C{{\mathbb C}}
\newcommand \ZZ {{\mathbb Z}}
\newcommand \NN {{\mathbb N}}
\newcommand \QQ{{\mathbb Q}}
\newcommand \RR {{\mathbb R}}
\newcommand \PR {{\mathbb P}}
\newcommand \AF {{\mathbb A}}
\newcommand \GG {{\mathbb G}}
\newcommand \bcA {{\mathscr A}}
\newcommand \bcC {{\mathscr C}}
\newcommand \bcD {{\mathscr D}}
\newcommand \bcF {{\mathscr F}}
\newcommand \bcG {{\mathscr G}}
\newcommand \bcH {{\mathscr H}}
\newcommand \bcM {{\mathscr M}}
\newcommand \bcJ {{\mathscr J}}
\newcommand \bcL {{\mathscr L}}
\newcommand \bcO {{\mathscr O}}
\newcommand \bcP {{\mathscr P}}
\newcommand \bcQ {{\mathscr Q}}
\newcommand \bcR {{\mathscr R}}
\newcommand \bcS {{\mathscr S}}
\newcommand \bcV {{\mathscr V}}
\newcommand \bcW {{\mathscr W}}
\newcommand \bcX {{\mathscr X}}
\newcommand \bcY {{\mathscr Y}}
\newcommand \bcZ {{\mathscr Z}}
\newcommand \goa {{\mathfrak a}}
\newcommand \gob {{\mathfrak b}}
\newcommand \goc {{\mathfrak c}}
\newcommand \gom {{\mathfrak m}}
\newcommand \gon {{\mathfrak n}}
\newcommand \gop {{\mathfrak p}}
\newcommand \goq {{\mathfrak q}}
\newcommand \goQ {{\mathfrak Q}}
\newcommand \goP {{\mathfrak P}}
\newcommand \goM {{\mathfrak M}}
\newcommand \goN {{\mathfrak N}}
\newcommand \uno {{\mathbbm 1}}
\newcommand \Le {{\mathbbm L}}
\newcommand \Spec {{\rm {Spec}}}
\newcommand \Gr {{\rm {Gr}}}
\newcommand \Pic {{\rm {Pic}}}
\newcommand \Jac {{{J}}}
\newcommand \Alb {{\rm {Alb}}}
\newcommand \Corr {{Corr}}
\newcommand \Chow {{\mathscr C}}
\newcommand \Sym {{\rm {Sym}}}
\newcommand \Prym {{\rm {Prym}}}
\newcommand \cha {{\rm {char}}}
\newcommand \eff {{\rm {eff}}}
\newcommand \tr {{\rm {tr}}}
\newcommand \Tr {{\rm {Tr}}}
\newcommand \pr {{\rm {pr}}}
\newcommand \ev {{\it {ev}}}
\newcommand \cl {{\rm {cl}}}
\newcommand \interior {{\rm {Int}}}
\newcommand \sep {{\rm {sep}}}
\newcommand \td {{\rm {tdeg}}}
\newcommand \alg {{\rm {alg}}}
\newcommand \im {{\rm im}}
\newcommand \gr {{\rm {gr}}}
\newcommand \op {{\rm op}}
\newcommand \Hom {{\rm Hom}}
\newcommand \Hilb {{\rm Hilb}}
\newcommand \Sch {{\mathscr S\! }{\it ch}}
\newcommand \cHilb {{\mathscr H\! }{\it ilb}}
\newcommand \cHom {{\mathscr H\! }{\it om}}
\newcommand \colim {{{\rm colim}\, }} 
\newcommand \End {{\rm {End}}}
\newcommand \coker {{\rm {coker}}}
\newcommand \id {{\rm {id}}}
\newcommand \van {{\rm {van}}}
\newcommand \spc {{\rm {sp}}}
\newcommand \Ob {{\rm Ob}}
\newcommand \Aut {{\rm Aut}}
\newcommand \cor {{\rm {cor}}}
\newcommand \Cor {{\it {Corr}}}
\newcommand \res {{\rm {res}}}
\newcommand \red {{\rm{red}}}
\newcommand \Gal {{\rm {Gal}}}
\newcommand \PGL {{\rm {PGL}}}
\newcommand \Bl {{\rm {Bl}}}
\newcommand \Sing {{\rm {Sing}}}
\newcommand \spn {{\rm {span}}}
\newcommand \Nm {{\rm {Nm}}}
\newcommand \inv {{\rm {inv}}}
\newcommand \codim {{\rm {codim}}}
\newcommand \Div{{\rm{Div}}}
\newcommand \sg {{\Sigma }}
\newcommand \DM {{\sf DM}}
\newcommand \Gm {{{\mathbb G}_{\rm m}}}
\newcommand \tame {\rm {tame }}
\newcommand \znak {{\natural }}
\newcommand \lra {\longrightarrow}
\newcommand \hra {\hookrightarrow}
\newcommand \rra {\rightrightarrows}
\newcommand \ord {{\rm {ord}}}
\newcommand \Rat {{\mathscr Rat}}
\newcommand \rd {{\rm {red}}}
\newcommand \bSpec {{\bf {Spec}}}
\newcommand \Proj {{\rm {Proj}}}
\newcommand \pdiv {{\rm {div}}}
\newcommand \CH {{\rm {CH}}}
\newcommand \wt {\widetilde }
\newcommand \ac {\acute }
\newcommand \ch {\check }
\newcommand \ol {\overline }
\newcommand \Th {\Theta}
\newcommand \cAb {{\mathscr A\! }{\it b}}


\newenvironment{pf}{\par\noindent{\em Proof}.}{\hfill\framebox(6,6)
\par\medskip}

\newtheorem{theorem}[subsection]{Theorem}
\newtheorem{conjecture}[subsection]{Conjecture}
\newtheorem{proposition}[subsection]{Proposition}
\newtheorem{lemma}[subsection]{Lemma}
\newtheorem{remark}[subsection]{Remark}
\newtheorem{remarks}[subsection]{Remarks}
\newtheorem{definition}[subsection]{Definition}
\newtheorem{corollary}[subsection]{Corollary}
\newtheorem{example}[subsection]{Example}
\newtheorem{examples}[subsection]{examples}

\title{Push-forwards of Chow groups of smooth ample divisors, with an emphasis  on Jacobian varieties.}
\author{ Kalyan Banerjee, Jaya NN  Iyer}
\address{School of Mathematics, Tata Institute of Fundamental research, 	
Homi Bhabha Road, Colaba, Mumbai 400001}
\email{kalyan@math.tifr.res.in}

\address{The Institute of Mathematical Sciences, CIT
Campus, Taramani, Chennai 600113, India}
\email{jniyer@imsc.res.in}

\author{James D. Lewis}

\address{Department of Mathematics, University of Alberta, Edmonton, AB, Canada T6G 2G1}
\email{lewisjd@ualberta.ca}

\footnotetext{Mathematics Classification Number: 14C15,14C20,15C25, 14C35,14F42}
\footnotetext{Keywords: Pushforward homomorphism, Theta divisor, Jacobian varieties, Chow groups, higher Chow groups.}

\begin{abstract}
With a homological Lefschetz conjecture in mind,  we prove the injectivity of the push-forward morphism on rational Chow groups, 
induced by the closed embedding of an ample divisor linearly equivalent to a higher multiple of the 
Theta divisor inside the Jacobian variety $J(C)$, where $C$ is a smooth irreducible complex projective curve.
\end{abstract}

\maketitle

\setcounter{tocdepth}{1}
\tableofcontents

\section{Introduction}

Suppose $X$ is a smooth projective variety defined over the field of complex numbers. Let $D\subset X$ be an ample smooth divisor on $X$.
Denote the closed embedding, $j:D\hookrightarrow X$. Consider the push-forward homomorphism  on Chow groups induced by $j$:
$$
j_*:\CH_k(D;\QQ) \rightarrow \CH_k(X; \QQ),
$$
for $k\geq 0$.
In this paper, we investigate the kernel of the morphism $j_*$.
This question is motivated by the following results and conjectures.
When Chow groups are replaced by the singular homology of a smooth projective variety over $\C$, 
the (dual of the) Lefschetz hyperplane theorem gives an isomorphism of the pushforward map:
$$
j_*:H_k(D,\ZZ) \to H_k(X,\ZZ)
$$
for $k \,<\, \dim D$, and surjectivity when $k= \dim D$.
M. Nori \cite[Conjecture 7.2.5]{Nori} conjectured the following:
\begin{conjecture}\label{Nor}
Suppose $D$ is a very general smooth ample divisor on $X$, of sufficiently large degree. Then the restriction map (the refined Gysin map, \cite{Fulton}):
$$
j^*:\CH^p(X;\QQ) \rightarrow \CH^p(D;\QQ)
$$
is an isomorphism, for $p<\dim D$ and is injective, for $p=\dim D$. 
\end{conjecture}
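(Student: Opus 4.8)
This is M.~Nori's conjecture; since it is expected to follow from the standard conjectures of Bloch--Beilinson type but is open unconditionally, the plan is not to establish it outright but to reduce it, step by step, to cohomological statements that are theorems (Lefschetz, Nori connectivity) together with the conjectural filtration on Chow groups. The first move is to split the claim into \emph{surjectivity} of $j^*$ for $p<\dim D$ and \emph{injectivity} of $j^*$ for $p\le \dim D$, and to realize the ``very general'' hypothesis through a family. Fix a very ample $L$ on $X$, take $d\gg 0$, and let $S\subset |L^{\otimes d}|$ be the open locus of smooth divisors, with universal divisor $\bcD\to S$ and inclusions $j_s:D_s\hookrightarrow X$ of the fibres. ``Very general'' then means: for all $s$ outside a countable union of proper closed subvarieties of $S$.

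For \emph{injectivity}, suppose $\alpha\in \CH^p(X)\otimes\QQ$ with $j_s^*\alpha=0$ at a very general $s$. The plan is to spread out: the classes $j_s^*\alpha$ form an algebraic family over $S$, so vanishing on a very general member propagates to the generic point of $S$ and hence over a dense open $U\subset S$. The projection formula gives $j_{s*}j_s^*\alpha=\alpha\cdot c_1(L^{\otimes d})$, so one at once obtains $\alpha\cdot c_1(L^{\otimes d})=0$ in $\CH^{p+1}(X)\otimes\QQ$. To upgrade this to $\alpha=0$ one needs multiplication by the ample class to be injective on $\CH^p(X)\otimes\QQ$ in this range --- a Chow-level ``hard Lefschetz'' which holds on cohomology but is false for Chow groups in general. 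Closing this gap is where the conjectural input is unavoidable.

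For \emph{surjectivity} with $p<\dim D$, one must show that every class in $\CH^p(D_s)\otimes\QQ$ is restricted from $X$. The cohomological engine here is Nori's connectivity theorem \cite{Nori}: for $d\gg 0$ the map $H^k(X,\QQ)\to H^k(D_s,\QQ)$ is an isomorphism in the expected range, and, more strongly, the total space $\bcD$ carries no cohomology beyond that pulled back from $X\times S$ up to the predicted degree. This forbids ``new'' cohomology on the very general $D_s$ below the middle dimension and so reduces the surjectivity of $j^*$, at the level of cohomology, to a theorem.

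The genuine obstacle --- and the reason the assertion is posed as a conjecture --- is the passage from these cohomological facts to the Chow groups themselves. This is governed by the conjectural Bloch--Beilinson filtration $F^{\bullet}$ on $\CH^{p}(-)\otimes\QQ$, whose graded quotients are controlled by cohomology (and, through the generalized Hodge conjecture, by Hodge-theoretic data). Granting that filtration and its expected functoriality under $j^*$ and $j_*$, one transports Nori's isomorphism and the Lefschetz injectivity through the finitely many graded steps to the statement for $\CH^p(D_s)\otimes\QQ$. Unconditionally, only the cohomological analogues persist; the injectivity at the extreme value $p=\dim D$, namely the restriction of $1$-cycles on $X$ to $0$-cycles on $D_s$, is the most delicate point, accessible in low dimension by Mumford--Roitman decomposition-of-the-diagonal arguments but needing the full conjectures in general.
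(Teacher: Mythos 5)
The statement you were asked to prove is Conjecture \ref{Nor} (Nori's conjecture), and the paper contains no proof of it: it is stated as an open conjecture that motivates the paper's actual results. Your proposal correctly recognizes this and, rather than claiming a proof, reduces the statement to known cohomological theorems plus the conjectural Bloch--Beilinson machinery; that is the appropriate move, and it parallels what the paper itself does, but for \emph{different, weaker} statements. Concretely, the paper's \S\ref{motive} gives a conditional argument (under HC and BBC) for Conjectures \ref{Par} and \ref{Par2} only: using the candidate filtration of Theorem \ref{TMot}, the weak Lefschetz isomorphism $j^*:H^{2r-\nu}(X,\QQ)\to H^{2r-\nu}(D,\QQ)$ in degrees $2r-\nu\leq \dim D-1$ gives isomorphisms on the graded pieces $Gr_F^{\nu}\CH^r$, and downward induction on $\nu$ yields the isomorphism on $\CH^r(X;\QQ)$ for $r\leq \frac{\dim D-1}{2}$. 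That argument cannot reach the range $p<\dim D$ of Conjecture \ref{Nor}, since weak Lefschetz fails beyond degree $\dim D$; your proposal supplies exactly the extra cohomological input needed, namely Nori's connectivity theorem (valid precisely in the ``very general, sufficiently large degree'' regime) together with a spread of the cycle over the parameter space $S$ to exploit that hypothesis. Your injectivity discussion --- projection formula $j_{s*}j_s^*\alpha=\alpha\cdot c_1(L^{\otimes d})$, the observation that one then needs a Chow-level hard Lefschetz, and the conjectural closing of that gap by transporting cohomological hard Lefschetz through the finitely many graded pieces of $F^\bullet$ --- closely mirrors the paper's diagrams (\ref{JL1}) and (\ref{JL2}), where $j_*\circ j^*=\cup\{D\}$ and injectivity is propagated through the filtration by downward induction. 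In short: both you and the paper treat the statement as irreducibly conditional; the paper's route (weak Lefschetz $+$ BB filtration) conditionally proves the weaker Conjecture \ref{Par}, while your route (Nori connectivity $+$ spreading $+$ BB filtration) is the natural one for the stronger Nori bound, and you rightly flag the extreme case $p=\dim D$ (restriction to $0$-cycles) as the most delicate point, beyond what either filtration argument delivers directly.
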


More generally, we have (see \cite[Conjecture 1.5]{Paranjape}):
\begin{conjecture}\label{Par}
Let $D$ be a smooth ample divisor on $X$. Then the restriction map for the inclusion of $D$ in $X$:
$$
\CH^p(X;\QQ) \rightarrow \CH^p(D;\QQ)
$$
is an isomorphism, for $p\leq \frac{\dim{D}-1}{2}$.
\end{conjecture}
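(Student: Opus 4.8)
The stated range is exactly the range in which cohomological restriction is already an isomorphism: writing $n=\dim X$, so that $\dim D=n-1$, the hypothesis $p\le\frac{\dim D-1}{2}$ gives $2p\le \dim D-1<\dim D$, and the (dual of the) Lefschetz hyperplane theorem recalled above furnishes an isomorphism
$$
j^*:H^{2p}(X,\QQ)\xrightarrow{\ \sim\ }H^{2p}(D,\QQ),\qquad 2p<\dim D.
$$
The plan is to promote this isomorphism from cohomology to rational Chow groups. First I would run the cycle class map $\cl$ and treat injectivity and surjectivity of $j^*$ separately. For injectivity, if $W\in\CH^p(X)\otimes\QQ$ satisfies $j^*W=0$, then $j^*\cl(W)=\cl(j^*W)=0$, so by the cohomological isomorphism $W$ is homologically trivial on $X$. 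For surjectivity, given $Z\in\CH^p(D)\otimes\QQ$ the Hodge class $\cl(Z)$ has a unique rational Hodge preimage in $H^{2p}(X,\QQ)$; if this preimage is represented by an algebraic cycle $W$, then $Z-j^*W$ is homologically trivial on $D$. In both cases the problem collapses onto the subgroups of homologically trivial cycles: I must show that $j^*$ is injective there, and that it maps onto the homologically trivial cycles of $D$ in the given range. So the entire content concerns homologically trivial cycles.

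Second, to control the homologically trivial part I would try to construct a correspondence that splits $j^*$. The projection-formula identities $j_*j^*=(-)\cdot c_1(L)$ on $\CH^\bullet(X)\otimes\QQ$ and $j^*j_*=(-)\cdot c_1(L|_D)$ on $\CH^\bullet(D)\otimes\QQ$ reduce the question to inverting cup product with the ample class in the Lefschetz range. Cohomologically such an inverse exists by Hard Lefschetz; the Lefschetz standard conjecture would make it algebraic, producing an honest correspondence that splits $j^*$ on Chow groups and would finish the argument. The route that actually yields unconditional progress is instead Nori's connectivity theorem applied to the universal family of smooth divisors: for $D$ very general of large degree it forces the relative cycle-theoretic invariants to vanish in the appropriate range, which is precisely the mechanism underlying Conjecture \ref{Nor}. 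A Lefschetz-pencil induction on $\dim X$, successively cutting by the pencil and reducing to lower-dimensional complete intersections where the connectivity bounds are strongest, is the natural device for propagating such vanishing down the full range $p\le\frac{\dim D-1}{2}$.

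The hard part will be exactly the passage from cohomological connectivity to cycle-theoretic connectivity. Granting Lefschetz, the surviving obstructions are threefold: the Hodge conjecture, needed to realise the lifted cohomology class by an algebraic cycle in the surjectivity step; the Lefschetz standard conjecture, needed to turn the Hard Lefschetz inverse into an algebraic correspondence; and the Bloch--Beilinson conjectures, needed to annihilate the graded pieces $\Gr^i_F\CH^p$ governed by the cohomology $H^{2p-i}$ once a cycle is known to be homologically trivial. For a \emph{very general} ample divisor of large degree, Nori's theorem supplies enough of this vanishing to make headway; but for an \emph{arbitrary} smooth ample divisor these inputs are unavailable. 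I therefore expect Conjecture \ref{Par} to be a consequence of the standard conjectures, and to be provable unconditionally only in low dimension or in special geometries where the homologically trivial cycles can be controlled directly. Bridging that gap is the genuine obstacle.
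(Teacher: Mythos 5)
The statement you were asked to prove is a conjecture (Paranjape's), and the paper does not prove it unconditionally either: what the paper supplies, in \S\ref{motive}, is a derivation of it from the Hodge conjecture (HC) plus the Bloch--Beilinson conjecture (BBC). Your final paragraph reaches essentially the same verdict, and even the same mechanism --- graded pieces $Gr_F^{\nu}\CH^p$ governed by the single cohomology group $H^{2p-\nu}$, trivialized in the Lefschetz range --- so your overall assessment is correct. The differences are worth recording. First, the paper's conditional argument is more uniform than your separate treatment of injectivity and surjectivity: it uses the explicit filtration of Theorem \ref{TMot}, which exists \emph{unconditionally} (HC and BBC enter only to make the graded pieces factor through cohomology, to make the inverse of the weak Lefschetz isomorphism cycle-induced, and to force $\bigcap_{\nu}F^{\nu}=0$); weak Lefschetz gives $H^{2r-\nu}(X,\QQ)\xrightarrow{\sim}H^{2r-\nu}(D,\QQ)$ for every $0\leq\nu\leq r$ once $2r<\dim D$, hence isomorphisms on every graded piece $Gr_F^{\nu}\CH^r$, and these are assembled by downward induction on $\nu$ along the finite, separated filtration. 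Your reduction to homologically trivial cycles is just the $\nu=0$ step of that induction. Second, your appeal to the Lefschetz standard conjecture is redundant: over $\C$ it is a consequence of the HC, and in any case the paper's argument for \emph{this} conjecture never inverts the Lefschetz operator --- the correspondence-splitting device you describe, via $j_*j^*=\cup\,c_1(L)$ and hard Lefschetz, is precisely the paper's argument for the \emph{dual} statement on pushforwards (Conjecture \ref{Par2} and Theorem \ref{thmL}), not for the restriction statement. Third, Nori's connectivity theorem is not a route to this statement: it concerns very general divisors of sufficiently large degree (the setting of Conjecture \ref{Nor}), whereas here $D$ is an arbitrary smooth ample divisor; you acknowledge this yourself in your last paragraph, but it should then be excised from the plan rather than described as ``the route that actually yields unconditional progress.''
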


It seems reasonable to pose the following dual of above Chow Lefschetz questions:
\begin{conjecture}\label{Nori2}
The pushforward map on the rational Chow groups, for a very general ample divisor $D\subset X$ of sufficiently large degree:
$$
j_*:\CH_k(D;\QQ) \rightarrow \CH_k(X;\QQ)
$$
is injective, whenever $k>0$.
\end{conjecture}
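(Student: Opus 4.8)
The plan is to bootstrap from the homological statement (the dual Lefschetz hyperplane theorem quoted above) to the Chow-theoretic one by means of a Bloch--Beilinson type filtration, with Nori's Conjecture \ref{Nor} supplying the control on the cohomology of $D$ that the large-degree, very-general hypotheses make available. The starting point is the projection formula: writing $L=\bcO_X(D)$ and $H=c_1(L)$, one has $j^*j_*(\alpha)=(H|_D)\cdot\alpha$ and $j_*j^*(\beta)=H\cdot\beta$, so the two composites are exactly the intersection-with-ample (Lefschetz) operators on $\CH^\bullet(D)\otimes\QQ$ and $\CH^\bullet(X)\otimes\QQ$. In cohomology these are governed by Hard Lefschetz, and the Gysin map $j_*\colon H_k(D)\to H_k(X)$ is an isomorphism for $0<k<\dim D$; thus the homological shadow of the assertion holds precisely in the range $k>0$. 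The boundary cases are consistent: $k=\dim D$ is trivial on Chow groups, since $\CH_{\dim D}(D)\otimes\QQ=\QQ\cdot[D]\hookrightarrow\CH^1(X)\otimes\QQ$ as $D$ is ample, while $k=0$ genuinely fails because $\cdot H$ cannot be inverted on $0$-cycles (the Mumford/Albanese-kernel obstruction). The real task is therefore to lift an isomorphism of homology groups to an injection of Chow groups.

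To do this I would invoke a Bloch--Beilinson--Murre filtration $F^\bullet$ on $\CH_k(-)\otimes\QQ$ with its expected formal properties: finiteness ($F^{p+1}\CH^p=0$), functoriality for the action of algebraic correspondences, and the feature that $\gr_F^\nu\CH^p$ is cut out by, and its nontriviality detected on, the cohomology group $H^{2p-\nu}$. Since $j_*$ and $j^*$ are correspondences, they are strictly compatible with $F^\bullet$, and the induced map on each $\gr_F^\nu$ is the one determined by the Gysin morphism in the corresponding cohomological degree. For $0<k<\dim D$ every such map is the Lefschetz isomorphism above, hence injective, and a descending induction on the finite filtration then forces $j_*$ itself to be injective on $\CH_k(D)\otimes\QQ$. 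This is the precise sense in which Conjecture \ref{Nori2} is a formal consequence of well known conjectures, namely the existence of such a filtration together with Hard Lefschetz and the Lefschetz hyperplane theorem; the very-general, large-degree hypotheses enter through Conjecture \ref{Nor}, which guarantees that the only cohomology of $D$ not pulled back from $X$ is the primitive (vanishing) part, so that no spurious transcendental classes appear in $\ker j_*$ for $k>0$.

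The main obstacle is that the Bloch--Beilinson filtration is not known to exist, so the induction cannot be run unconditionally and the graded-piece injectivity must be supplied by geometry in each concrete case. This is exactly the route taken in the present paper for the Theta divisor: one replaces the conjectural filtration by the genuinely available structure on $\CH(J(C))\otimes\QQ$ --- the Beauville/Fourier decomposition arising from the group law --- together with the Abel--Jacobi description of cycles on $\Theta$ via symmetric products of $C$ and the copy of $C$ sitting inside $J(C)$, and then verifies the needed injectivity of $j_*$ by hand. I expect the hardest point to be precisely this verification at the transcendental level, that is, controlling $\ker j_*$ on the deepest step of the filtration, where the homological input is vacuous and one is forced to exploit the special geometry rather than any formal Lefschetz mechanism.
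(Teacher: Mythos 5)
The first thing to note is that the statement you were asked to prove is posed in the paper as a conjecture: the paper offers no proof of it, conditional or otherwise, for the full range $k>0$. What the paper actually proves (in \S\ref{motive}, assuming the Hodge conjecture and the Bloch--Beilinson conjecture) is Theorem \ref{thmL}: injectivity of $j_*$ on $F^{\nu}\CH_k(D;\QQ)$ for $k > (\dim D - \nu)/2$, whose case $\nu=0$ is Conjecture \ref{Par2}, i.e.\ the range $k > \dim D/2$. Your proposal, in the part that is sound, reproduces exactly that argument --- algebraicity of $(j^*)^{-1}$ via the Hodge conjecture, the projection-formula identity $j_*\circ j^* = \cdot[D]$, functoriality of a Bloch--Beilinson filtration under correspondences, downward induction on $\nu$ --- and so it proves (conditionally) the same thing the paper proves. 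The overreach is the claim that this mechanism gives injectivity for all $k>0$.

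The concrete gap is an index error. The graded piece $\gr_F^{\nu}\CH_k(D;\QQ)$ is governed not by $H_k(D)$ but by $H^{2p-\nu}(D)$ with $p=\dim D-k$, i.e.\ by homology in degree $2k+\nu$. The Gysin map $H_{2k+\nu}(D;\QQ)\to H_{2k+\nu}(X;\QQ)$ is an isomorphism for $2k+\nu<\dim D$ and injective for $2k+\nu>\dim D$, but when $2k+\nu=\dim D$ its kernel is the vanishing (primitive) cohomology of $D$, which for large degree is enormous. The bad value $\nu=\dim D-2k$ lies in the allowed range $0\le\nu\le\dim D-k$ precisely when $0\le k\le\dim D/2$; hence your induction closes only when $2k>\dim D$, recovering Conjecture \ref{Par2} and nothing more. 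The paper's own Fermat quintic example ($\xi=L_1-L_2$, $k=2=\dim D/2$) is exactly this failure mode. Your closing appeal to Conjecture \ref{Nor} does not repair this; if anything it backfires. Granting Conjecture \ref{Nor}, every $\alpha\in\CH_k(D;\QQ)$ with $k>0$ equals $j^*\beta$ for a unique $\beta\in\CH^p(X;\QQ)$, so injectivity of $j_*$ becomes \emph{equivalent} to injectivity of $\cdot[D]:\CH^p(X;\QQ)\to\CH^{p+1}(X;\QQ)$ for all $p<\dim D$ --- a hard Lefschetz statement for Chow groups that is false in general. For instance, take $X=\PR^2\times\PR^2$ and $[D]=ah_1+bh_2$ ample of arbitrarily large degree; the nonzero class $\beta=a^2h_1^2-ab\,h_1h_2+b^2h_2^2\in\CH^2(X;\QQ)$ satisfies $\beta\cdot[D]=0$, so under Conjecture \ref{Nor} the map $j_*$ on $\CH_1(D;\QQ)$ would have nonzero kernel no matter how large the degree. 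In short, the range $0<k\le\dim D/2$ is not accessible by the formal Lefschetz/filtration mechanism: it is exactly where non-formal input about very general large-degree divisors is needed (Nori connectivity, Green--Voisin type triviality of Abel--Jacobi images on vanishing cohomology), and where --- as your own reduction inadvertently shows --- the conjecture itself is delicate.
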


Similarly, we could pose the dual version of Conjecture \ref{Par}:
\begin{conjecture}\label{Par2}
Let $D$ be a smooth ample divisor on $X$. The pushforward map on the rational Chow groups, 
$$
j_*:\CH_k(D;\QQ) \rightarrow \CH_k(X;\QQ)
$$
is injective, whenever $k\,>\, \frac{\dim{D}}{2}$.
\end{conjecture}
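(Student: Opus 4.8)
\medskip
\noindent\textbf{A strategy towards Conjecture \ref{Par2}.} Write $d=\dim D$, so that $\dim X=d+1$, and rewrite everything in codimension: a $k$-cycle on $D$ has codimension $d-k$, so $j_*$ is the map
$$
j_*:\CH^{d-k}(D)\otimes\QQ\lra \CH^{d+1-k}(X)\otimes\QQ .
$$
The first thing I would do is check the statement at the level of cycle classes, to confirm that $k>\tfrac d2$ is the correct bound. For $2k<d$ the Lefschetz hyperplane theorem already gives that $j_*:H_{2k}(D)\to H_{2k}(X)$ is an isomorphism. For $2k>d$ the Gysin map $j_!:H^{2(d-k)}(D)\to H^{2(d-k)+2}(X)$ satisfies $j^*j_!=(-)\cup c_1(N_{D/X})$, and the hard Lefschetz theorem on $D$ makes cupping with the ample class $c_1(N_{D/X})$ injective on $H^q(D)$ for $q\le d-1$, so that $j_!$, equivalently $j_*$ on homology, is injective on $H^{2(d-k)}(D)$ exactly when $2(d-k)\le d-1$, that is $k\ge\tfrac{d+1}{2}$. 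Thus on cohomology $j_*$ is injective precisely in the conjectured range $k>\tfrac d2$, the middle dimension $2k=d$ being excluded by primitive (vanishing) cohomology. This tells me that the real content of the conjecture is to lift this cohomological injectivity to rational equivalence.

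The second step is the self-intersection formula. Since $j$ is a regular embedding of codimension one, for $\alpha\in\CH_k(D)$ one has
$$
j^*j_*\alpha=c_1(N_{D/X})\cap\alpha\in\CH_{k-1}(D)\otimes\QQ ,
$$
and $N_{D/X}=\bcO_X(D)|_D$ is ample, being the restriction of an ample class. Hence $\ker(j_*)\subseteq\ker\bigl(c_1(N_{D/X})\cap(-)\bigr)$, and the conjecture follows once I prove that capping with the ample normal class
$$
c_1(N_{D/X})\cap(-):\CH_k(D)\otimes\QQ\lra\CH_{k-1}(D)\otimes\QQ
$$
is injective for $k>\tfrac d2$. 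Equivalently, granting Conjecture \ref{Par} for the pair $(X,D)$ (which applies since the source codimension $d-k$ satisfies $d-k\le\tfrac{d-1}{2}$ when $k>\tfrac d2$), I may write $\alpha=j^*\beta$ and reduce, via the projection formula $j_*j^*\beta=\beta\cdot[D]$, to the injectivity of multiplication by the ample class $[D]$ on $\CH^{d-k}(X)\otimes\QQ$ below the middle codimension. Either way the problem is an instance of a \emph{hard Lefschetz property for rational Chow groups} below the middle dimension.

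The final step --- and the place where I expect all of the difficulty to be concentrated --- is precisely this lifting. Hard Lefschetz is false for Chow groups in general, so capping with an ample class need not be injective: the obstruction lives entirely in the kernel of the cycle class map, which by the Bloch--Beilinson philosophy is governed by the transcendental/coniveau part of the cohomology and is expected to be highly nontrivial. My plan would be to attack it through a decomposition-of-the-diagonal argument in the spirit of Bloch--Srinivas, using the cohomological hard Lefschetz of the first step as the numerical input and trying to spread a rational equivalence $c_1(N_{D/X})\cap\alpha\sim 0$ back to $\alpha\sim 0$; but without strong representability hypotheses on the relevant Chow groups this cannot succeed in general, which is exactly why the statement is posed as a conjecture rather than a theorem. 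Accordingly, for an unconditional result one must exploit extra geometry of the particular pair $(X,D)$ --- as is done in this paper for the Theta divisor $\Theta\subset J(C)$, where the Fourier--Mukai/Beauville structure on $\CH_*(J(C))\otimes\QQ$ and the Pontryagin action of $\theta$ substitute for the missing hard Lefschetz and force the injectivity of $j_*$.
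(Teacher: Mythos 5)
Your reductions coincide with the first half of the paper's own (conditional) treatment of this conjecture: the paper likewise passes to $j_*\circ j^* = \cup\{D\}$ via diagram (\ref{JL1}), invokes the Chow--Lefschetz isomorphism of Conjecture \ref{Par} (established there under HC $+$ BBC) to write every class as $j^*\beta$, and uses hard Lefschetz on cohomology as the numerical input; your check that $k>\frac{\dim D}{2}$ is exactly the cohomological range agrees with the paper's. The genuine gap is in your final step. You assert that lifting the cohomological injectivity of $\cup[D]$ to $\CH^{d-k}(X;\QQ)$ amounts to a hard Lefschetz property for Chow groups, propose a Bloch--Srinivas diagonal decomposition, and then (correctly) concede that this cannot succeed without representability hypotheses --- concluding that nothing more can be said. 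But the paper does complete this step, conditionally on the same HC $+$ BBC you already granted in order to use Conjecture \ref{Par}: this is Theorem \ref{thmL}, whose case $\nu=0$ is precisely Conjecture \ref{Par2}. The mechanism you are missing is the Bloch--Beilinson filtration of Theorem \ref{TMot}: its graded pieces $Gr_F^{\nu}\CH^r(X;\QQ)$ are controlled by cohomology (property (v): cycle-induced correspondences act on $Gr_F^{\nu}$ through their action on $H^{2r-\nu}(X,\QQ)$), so the cohomological injectivity of $\cup[D]$ in degrees $2r-\nu < \dim X$ forces injectivity of $j_*\circ j^*$ on each graded piece; then the short exact sequences in diagram (\ref{JL2}), downward induction on $\nu$, and the separatedness $D^r(X)=\bigcap_{\nu}F^{\nu}=0$ (property (vi), valid under BBC $+$ HC) propagate injectivity from the graded pieces up to $F^0=\CH^{d-k}(X;\QQ)$, hence to $j_*$ itself. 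In other words, under the conjectures the filtration is exactly the ``representability'' substitute you thought was unavailable: it interpolates between Chow groups and cohomology in the way needed to transfer hard Lefschetz.

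A smaller inaccuracy: your closing claim that the paper's unconditional Theta-divisor result rests on a Fourier--Mukai/Beauville decomposition of $\CH_*(J(C);\QQ)$ and the Pontryagin action of $\theta$ misdescribes the paper. Theorem \ref{MAIN} is proved by comparing $\Theta\subset J(C)$ with $\Sym^{g-1}C\subset \Sym^g C$ via the birational Abel--Jacobi (blow-up) morphisms, feeding in Collino's injectivity theorem \cite{Collino} for symmetric products, and controlling the $F^1$ and $F^2$ pieces with Abel--Jacobi maps on the smooth locus $\Theta-B$.
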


In \S\ref{motive}, we provide a motivic interpretation of Conjectures \ref{Par} and \ref{Par2}.
If the Hodge conjecture and Bloch-Beilinson conjecture (based on the injectivity of the Abel-Jacobi map for for smooth projective 
varieties over $\ol{\QQ}$) hold, then both Conjectures \ref{Par} and \ref{Par2} hold. Concerning Conjecture \ref{Par2}, we prove the
following generalization. (See Theorem \ref{MainThmJL}.):

\begin{theorem} \label{thmL}
Assume the Hodge and Bloch-Beilinson conjectures hold. Then:
 \[
 k > \frac{\dim D -\nu}{2} \Rightarrow j_*:F^{\nu}\CH_k(D;\QQ) \hookrightarrow F^{\nu}\CH_k(X;\QQ),
 \]
where $\{F^{\nu}\CH^r(X;\QQ)\}_{\nu \geq 0}$ is the Bloch-Beilinson filtration on $\CH^r(X;\QQ)$. (The
case $\nu = 0$ yields the statement of Conjecture \ref{Par2}.)
\end{theorem}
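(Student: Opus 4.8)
The plan is to combine the self-intersection formula with hard Lefschetz, transported to Chow groups through the graded pieces of the Bloch--Beilinson filtration. Write $d=\dim D$ and identify $\CH_k(D;\QQ)=\CH^{p}(D;\QQ)$ with $p=d-k$. The assumed conjectures furnish a filtration $F^\bullet$ that is finite ($F^{p+1}\CH^p=0$), separated, and strictly compatible with the action of algebraic correspondences; in particular both $j_*$ and $j^*$, being actions of the graph cycle $[\Gamma_j]\in\CH(D\times X;\QQ)$ and its transpose, respect $F^\bullet$ and induce maps on graded pieces. A standard bookkeeping argument then reduces the theorem to the assertion that $\gr^\nu_F j_*$ is injective on $\gr^{\nu'}_F\CH_k(D;\QQ)$ for every $\nu'\ge\nu$: indeed, if $\alpha\in F^\nu\CH_k(D;\QQ)$ satisfies $j_*\alpha=0$ but $\alpha\neq0$, then by separatedness $\alpha$ lies in $F^{\nu''}\setminus F^{\nu''+1}$ for some $\nu''\ge\nu$, and its nonzero image in $\gr^{\nu''}_F$ survives under $\gr^{\nu''}_F j_*$, a contradiction. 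Since $k>(d-\nu)/2$ forces $k>(d-\nu')/2$ for all $\nu'\ge\nu$, it suffices to treat a single $\nu$.

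Next I would bring in the self-intersection formula. As $j$ is a regular embedding of codimension one with $N_{D/X}=\bcO_X(D)|_D$, setting $L:=c_1(\bcO_X(D)|_D)$ (an ample class on $D$) gives
\[
j^*\circ j_* = L\cap(-)\colon \CH_k(D;\QQ)\to \CH_{k-1}(D;\QQ).
\]
Hence $\gr^\nu_F(j^*)\circ\gr^\nu_F(j_*)=\gr^\nu_F\big(L\cap(-)\big)$, so injectivity of $\gr^\nu_F(j_*)$ follows once $L\cap(-)$ is injective on $\gr^\nu_F\CH^p(D;\QQ)$. Because capping with a divisor class is the action of an algebraic correspondence, it preserves $F^\bullet$ and induces $L\cap(-)\colon \gr^\nu_F\CH^p(D;\QQ)\to \gr^\nu_F\CH^{p+1}(D;\QQ)$, whose cohomological avatar is the cup product $c_1(L)\cup(-)\colon H^{2p-\nu}(D)\to H^{2p-\nu+2}(D)$.

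The heart of the argument is to invert this cup product by an algebraic operator. By hard Lefschetz on $D$, the map $c_1(L)\cup(-)\colon H^{m}(D;\QQ)\to H^{m+2}(D;\QQ)$ is injective for $m\le d-1$, and the inequality $k>(d-\nu)/2$ is equivalent to $2p-\nu = 2(d-k)-\nu\le d-1$, placing the relevant degree inside the Lefschetz range. The Hodge conjecture implies the Lefschetz standard conjecture for $D$, so the inverse Lefschetz operator is induced by an algebraic cycle; thus there is a correspondence $\lambda\in\CH(D\times D;\QQ)$ with $[\lambda]\circ\big(c_1(L)\cup(-)\big)=\id$ on $H^{2p-\nu}(D;\QQ)$. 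The defining property of the Bloch--Beilinson filtration is that the action of a correspondence on $\gr^\nu_F\CH^p$ is governed by its realization on $H^{2p-\nu}$; applying this to the composite $\lambda\circ\big(L\cap(-)\big)$, whose realization on $H^{2p-\nu}(D)$ is the identity, we conclude $\lambda\circ\big(L\cap(-)\big)=\id$ on $\gr^\nu_F\CH^p(D;\QQ)$. Therefore $L\cap(-)$, and a fortiori $j_*$, is injective on $\gr^\nu_F$, completing the reduction.

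The step I expect to be the main obstacle is the last one: pinning down and legitimately using the precise form of the Bloch--Beilinson (Murre) property that the action on $\gr^\nu_F$ factors through the weight-$(2p-\nu)$ realization, so that an algebraic left inverse on cohomology descends to a genuine left inverse on the graded Chow group. This is precisely where both hypotheses are spent: the Hodge conjecture to make the inverse Lefschetz operator algebraic (via the Lefschetz standard conjecture), and the Bloch--Beilinson conjecture to guarantee the filtration exists with the functoriality and cohomological control invoked above. The remaining ingredients --- the self-intersection formula, hard Lefschetz, and the finite-filtration bookkeeping --- are routine once this mechanism is in place.
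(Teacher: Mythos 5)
Your proposal is sound, but it follows a genuinely different route from the paper's. The paper composes in the opposite order: using weak Lefschetz and the filtration formalism it first shows that $j^*$ induces an isomorphism $F^{\nu}\CH^r(X;\QQ)\xrightarrow{\sim}F^{\nu}\CH^r(D;\QQ)$ in the relevant range, invokes the Hodge conjecture to realize $(j^*)^{-1}$ as a cycle-induced map, and then factors $j_*=(j_*\circ j^*)\circ(j^*)^{-1}$, proving $j_*\circ j^*=\cup\{D\}$ injective on $F^{\nu}\CH^r(X;\QQ)$ by hard Lefschetz on $X$ together with a downward induction through the filtration (diagrams (\ref{JL1}) and (\ref{JL2})). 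You instead use the self-intersection formula $j^*\circ j_*=c_1\big(\bcO_X(D)|_D\big)\cap(-)$ on $D$ itself, apply hard Lefschetz on $D$, and spend the Hodge conjecture on the Lefschetz standard conjecture for $D$ to obtain an algebraic left inverse $\lambda$ of the Lefschetz operator, which the Murre-type property of the filtration converts into a left inverse on $Gr_F^{\nu}\CH^{p}(D;\QQ)$. Your route needs neither the weak Lefschetz theorem nor the inverse $(j^*)^{-1}$, treats all $\nu$ uniformly (the paper writes out only $\nu=0$ and relegates general $\nu$ to a footnote and the phrase ``quite generally''), and the bound $k>(\dim D-\nu)/2\Leftrightarrow 2p-\nu\leq\dim D-1$ enters exactly where hard Lefschetz is applied; what the paper's route buys is that one and the same diagram simultaneously yields the restriction isomorphism, i.e.\ the motivic interpretation of Conjecture \ref{Par}, so injectivity of $j_*$ and bijectivity of $j^*$ appear as two faces of a single argument. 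Finally, the step you flag as the main obstacle --- that a correspondence acting as the identity on $H^{2p-\nu}(D,\QQ)$ acts as the identity on $Gr_F^{\nu}\CH^{p}(D;\QQ)$ --- is precisely the mechanism the paper itself uses tacitly when it transfers injectivity of $\cup\{D\}$ from $H^{2r-\nu}(X,\QQ)$ to $Gr_F^{\nu}\CH^r(X;\QQ)$ in diagram (\ref{JL2}); under HC $+$ BBC it is available for the filtration of Theorem \ref{TMot}, since $Gr_{\mathcal{F}}^{\nu}\CH^r$ embeds in $E_{\infty}^{\nu,2r-\nu}(\rho)$, on which correspondences act through the local system $R^{2r-\nu}\rho_{\ast}\QQ$. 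So your argument is correct at the same level of rigor as the paper's own proof.
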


A good source for the conjectural Bloch-Beilinson filtration is \cite{Ja}, which agrees with the filtration in \cite{JL}, under the assumptions of the
aforementioned Hodge and Bloch-Beilinson conjectures.

The motivation for the above dual Chow-Lefschetz conjectures, for us, arose while studying the following theorem by A. Collino.

Suppose $C$ is a smooth projective curve of genus $g$ over complex numbers. The symmetric power $\Sym^r(C)$ $(r>0)$ is a smooth projective variety of dimension $r$ and
fix a point $p\in C$.
The inclusion
$$
\Sym^{r-1}(C) \hookrightarrow \Sym^r(C), \,\, (x_1+x_2+...+x_{r-1}) +p \mapsto (x_1+x_2+...+x_{r-1}+p)
$$
is a smooth ample divisor \cite{ACGH}.
 
\begin{theorem}\cite[Theorem 1]{Collino}
The pushforward map on the Chow groups:
$$
\CH_k(\Sym^{r-1}(C);\QQ) \rightarrow \CH_k(\Sym^r(C);\QQ)
$$
is injective, $k\geq 0$.
\end{theorem}

This provides a prime example verifying Conjectures \ref{Nori2}, \ref{Par2} and the bounds in Theorem \ref{thmL}.

The next example is closely related to the above example on symmetric power.

In \S \ref{Incltheta}, our  aim is to verify the bounds given in Theorem \ref{thmL} when $D$ is the Theta divisor,  
on the Jacobian of a smooth projective curve. It is well-known that $\Theta$ is an ample divisor on $J(C)$.
 We state it here, as follows.

Let $C$ be a smooth projective curve of genus $g$ and let $\Th$ denote a Theta divisor inside the Jacobian $J(C)$ of $C$. Denote the inclusion
$j: \Theta \hookrightarrow J(C)$.

\begin{theorem}\label{thmI}
Assume $C$ is a non-hyperelliptic smooth projective curve of genus $g\geq 3$, over $\mathbb{C}$.
The pushforward morphisms
$$
j_*: F^1\CH_{g-2}(\Theta; \QQ) \rightarrow F^1\CH_{g-2}(J(C);\QQ)
$$
and
$$
j_*:F^2\CH_{g-3}(\Theta;\QQ) \rightarrow F^2\CH_{g-3}(J(C);\QQ)
$$
are injective.
\end{theorem}

See \S \ref{Mainthmmainsection}, Theorem \ref{Mainthmproof}.

In general, the Theta divisor is a singular variety with singular locus $B$, of dimension at least $g-4$. 
Equality holds if $C$ is non-hyperelliptic. Hence if $g\leq 3$ then $\Theta$ is smooth, and fulfils 
the above conjectural bound in Theorem \ref{thmL}. 
Furthermore, when  $g=4$ and $C$ is non-hyperelliptic (this is the generic situation), then $\Theta$ is singular and 
$B$ is a finite set  of points. The Chow groups of $\Theta$ are taken as the usual Chow groups 
$\CH_1(\Theta-B)$. The reader should be aware of the fact that the Bloch-Beilinson filtration only applies 
to smooth projective varieties. However, for our purposes, there is the Abel-Jacobi map defined on the cycles homologous 
to zero on this group, and we define $F^2\CH_{1}(\Theta;\QQ)$ as the kernel of this map. 
When $g>4$, the same convention is used
for $F^2\CH_{g-3}(\Theta;\QQ)$, i.e., 
$$
F^2\CH_{g-3}(\Theta;\QQ) := F^2\CH_{g-3}(\Theta-B;\QQ)
$$
Here the right term is the kernel of Abel-Jacobi map, see \S \ref{conv}.

\subsection{Comments on Theorem \ref{thmI}} We felt it important to incorporate some interesting comments 
from the referee regarding the above theorem. The assertion preceding Theorem \ref{thmI}, viz., to prove
Theorem \ref{thmL} in the case of Jacobians, can be construed as not as optimal as one would like. What is meant by this
is the following:

(i) There are two parameters, $k$ and $\nu$ in Theorem \ref{thmL}, and once one has proven the result for a given
pair $(k,\nu)$, one has it for all pairs $(k,\nu')$ with $\nu'\geq \nu$. So in particular, for a fixed $k$, the 
most interesting value of $\nu$ is the minimum value, $\nu = $ max$\{0,\dim D -2k+1\}$.

(ii) At the same time, since conjecturally $F^{\nu}\CH_k(D;\QQ) = F^{\nu}\CH^{\dim D -k}(D;\QQ) = 0$ for $\nu > \dim D -k$ (see 
Theorem \ref{TMot}), the case $\nu = \dim D -k$ is the smallest term of the filtration for which 
the statement is conjecturally non-trivial.

(iii) More succintly, in the case of a Jacobian and its Theta divisor, when one takes $k=g-2$, the case of most interest in
then $\nu =$ max$\{0,(g-1)-2(g-2)+1\} =$ max$\{0,4-g\}$. Note that if the curve is not hyperelliptic, then
$g\geq 3$, so that the case of most interest is $\nu=1$ for $g=3$, and $\nu=0$ for $g\geq 4$. When $k=g-3$, 
then one wants to look at $\nu = $ max$\{0,6-g\}$.  Therefore, for non-hyperelliptic curves of genus $3$, the case 
$\nu = 3$ is the case of most interest,  for genus $4$ the case $\nu = 2$ is of most
interest, for $g=5$ the case $\nu = 1$ is of most interest, and for higher genus, $\nu = 0$.

(iv) Consequently, for the first assertion of Theorem \ref{thmI}, regarding $F^1\CH_{g-2}$, for $g=3$ the 
statement is sharp with respect to Theorem \ref{thmL}, but for $g>3$ one should point out that from 
Theorem \ref{thmL}, one would really like to have the statement for $F^0\CH_{g-2}$. And for the
second assertion, regarding $F^2\CH_{g-3}$, in the case $g=3$ this choice $\nu = 2$ is stronger than what 
Theorem \ref{thmL} predicts, sharp for $g=4$, that for $g=5$, one would want $\nu=1$, and for $g\geq 6$ 
one would want $\nu=0$. At the same time, one can say that both assertions of Theorem \ref{thmI} are
made for the smallest term of the filtration on Chow for which the statement is conjecturally non-trivial.

{\small \textbf{Acknowledgements:}
The first named author is grateful to Department of Atomic Energy, India for funding this project. The third named author is partially supported by a
grant from the Natural Sciences and Engineering Research Council of Canada. The authors owe a debt of gratitude to the referee for 
going through our paper meticulously, and for the many helpful comments.}

\textbf{Notation}:
Here $k$ is an uncountable, algebraically closed field and all the varieties are defined over $k$. Denote
$$
\CH_d(X;{\mathbb Q}):= \CH_d(X)\otimes {\mathbb Q}.
$$
Here $X$ is a variety of pure dimension $n$, defined over $k$ and ${\CH}_d(X)$ denotes the Chow group of $d$-dimensional cycles modulo rational equivalence.

We write
$$
\CH_d(X,s;{\mathbb Q}) := \CH^{\dim X +s-d}(X,s) \otimes \mathbb Q,
$$
the Bloch's higher Chow groups (\cite{Bloch}) with ${\mathbb Q}$-coefficients.

\section{Motivic interpretations}\label{motive}

We wish to provide a motivic interpretation of Conjecture  \ref{Par2}.  But first some terminology,
and background material, which is specific to this section only. 
Let $\QQ(r)$ be the Tate twist and consider the category of mixed Hodge structures over $\QQ$ (MHS). For a $\QQ$-MHS V, we put
\[
\Gamma(V) = \hom_{\rm MHS}(\QQ(0),V),
\]
\[
J(V) = {\rm Ext}^1_{\rm MHS}(\QQ(0),V).
\]
For instance, if $X = X/\C$ is smooth and projective, then $\Gamma\big(H^{2r}(X,\QQ(r))\big)$ can be identified with 
$\QQ$-betti cohomology classes of Hodge type $(r,r)$, and  $J\big(H^{2r-1}(X,\QQ(r))\big)$ can be identified
(via J. Carlson) with the Griffiths jacobian (tensored with $\QQ$). There is the cycle class map
${\CH}^r(X;\QQ) \to \Gamma\big(H^{2r}(X,\QQ(r))\big)$, conjecturally surjective under the classical Hodge
conjecture (HC), with kernel $\CH^r_{\hom}(X;\QQ)$. Accordingly there
is the Griffiths Abel-Jacobi map $AJ\otimes\QQ: {\CH}^r_{\hom}(X;\QQ) \to J\big(H^{2r-1}(X,\QQ(r))\big)$.
Beilinson and Bloch have independently conjectured the following:

\begin{conjecture}[BBC]
Let $W/\ol{\QQ}$ be smooth and projective, and assume given an integer $r\geq 0$. Then the Abel-Jacobi map
\[
AJ\otimes\QQ : {\CH}_{\hom}^r(W/\ol{\QQ};\QQ) \to J\big(H^{2r-1}(W(\C),\QQ(r))\big),
\]
is injective.
\end{conjecture}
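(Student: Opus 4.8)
Since (BBC) is one of the deepest open problems in the subject, I do not expect a self-contained proof; the plan is instead to isolate the cases that can be settled unconditionally, to explain why the general case must be arithmetic in nature, and to indicate the motivic framework from which it would follow.

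First I would dispose of the low-codimension cases. For $r=0$ the claim is vacuous. The case $r=1$ is in fact a theorem: here $\CH^1_{\hom}(W/\ol{\QQ};\QQ)\cong\Pic^0(W)(\ol{\QQ})\otimes\QQ$, the target $J\big(H^1(W,\QQ(1))\big)$ is the associated complex torus in Carlson's description, and the Abel--Jacobi map is identified with the embedding of the group of $\ol{\QQ}$-points of the Picard variety into that torus, which is injective. The decisive feature here is that the homologically trivial cycles carry the structure of an abelian variety with a moduli interpretation, so that $AJ$ becomes a morphism of algebraic groups; this structure is precisely what disappears for $r\geq 2$.

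The hard part -- indeed the main obstacle -- is that the analogue of (BBC) over $\C$ is \emph{false}: by the theorems of Griffiths and Mumford the Abel--Jacobi map acquires an enormous kernel (for instance on $0$-cycles of a surface with $p_g>0$). Any argument must therefore exploit in an essential way that the ground field $\ol{\QQ}$ is a countable union of number fields. The natural device is to spread out a cycle $Z$, which is defined over some number field $k$, over a smooth base and to compare the conjectural Bloch--Beilinson filtration with motivic and absolute-Hodge cohomology, invoking the action of $\Gal(\ol{\QQ}/k)$; but there is at present no mechanism that converts this arithmetic smallness of $\ol{\QQ}$ into the required vanishing.

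Finally, the conceptual route I would pursue is to recognise (BBC) as the injectivity half of the Bloch--Beilinson formalism. One wants a filtration $F^\bullet$ on $\CH^r(W;\QQ)$ with $F^1=\CH^r_{\hom}$, with $\mathrm{Gr}^1_F$ injecting into the intermediate Jacobian via $AJ$, and with each $\mathrm{Gr}^\nu_F$ controlled by an $\mathrm{Ext}^\nu$ in a (still hypothetical) abelian category of mixed motives over $\ol{\QQ}$. Over $\ol{\QQ}$ the relevant pure motives should be semisimple and the groups $\mathrm{Ext}^\nu$ for $\nu\geq 2$ should vanish on the pieces in play, forcing $F^2\CH^r(W/\ol{\QQ};\QQ)=0$ and hence injectivity of $AJ\otimes\QQ$. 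Thus the plan reduces (BBC) to the existence of the motivic $t$-structure together with these Ext-vanishing statements -- which is exactly where the genuine difficulty lies, since that category and its properties are themselves conjectural. Consequently I would claim unconditionally only the range $r\leq 1$, and would otherwise treat (BBC) as a hypothesis feeding the remaining results, as the authors do.
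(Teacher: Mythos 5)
This statement is an open conjecture of Beilinson and Bloch; the paper gives no proof of it and simply uses it as a hypothesis (together with the Hodge conjecture) in Theorem \ref{thmL} and \S\ref{motive}. Your treatment is exactly right: you correctly decline to prove it, your unconditional remarks are accurate (the $r\leq 1$ cases, flatness of $\QQ$ giving injectivity of $\Pic^0(W)(\ol{\QQ})\otimes\QQ \to \Pic^0(W)(\C)\otimes\QQ$, the failure of the analogue over $\C$ by Griffiths--Mumford, and the Ext-vanishing heuristic in the conjectural motivic framework), and your conclusion to treat it as a standing hypothesis matches the paper's own usage.
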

\begin{remark} If one assumes the HC + BBC, then $W/\ol{\QQ}$ can be replaced by a smooth quasi-projective
variety.
\end{remark}

Next, we need to inform the reader of the conjectured Bloch-Beilinson (BB) filtration. First conceived
by Bloch and later fortified  by Beilinson in terms of motivic extension datum, the idea is to measure the
complexity of $\CH^r(X;\QQ)$ in terms of a conjectural descending filtration. Rather than defining it here,
we provide an explicit candidate  which will define a Bloch-Beilinson filtration in the event  that the HC and BBC hold.

\subsection{A candidate BB filtration} We begin with the following result, by recalling:

\begin{theorem}[\cite{JL}]\label{TMot}
Let $X/\C$ be smooth and projective, of dimension $d$.
Then for all $r\geq 0$, there is a descending  filtration,
\[
\CH^r(X;\QQ) = F^0 \supset F^1\supset \cdots \supset F^{\nu}
\supset F^{\nu +1}\supset
 \cdots \supset F^r\supset F^{r+1}
= F^{r+2}=\cdots,
\]
which satisfies the following:

\medskip
\noindent
{\rm (i)} $F^1 = \CH^r_{\hom}(X;\QQ)$.

\medskip
\noindent
{\rm (ii)} $F^2 \subseteq \ker AJ\otimes\QQ : \CH^r_{\hom}(X;\QQ)
\rightarrow J\big(H^{2r-1}(X(\C),\QQ(r))\big)$.

\medskip
\noindent
{\rm (iii)} $F^{\nu_1}\CH^{r_1}(X;\QQ)\bullet F^{\nu_2}\CH^{r_2}(X;\QQ)\subset 
F^{\nu_1 +\nu_2}\CH^{r_1+r_2}(X;\QQ)$, where $\bullet$ is
the intersection product.

\medskip
\noindent
{\rm (iv)} $F^{\nu}$ is  preserved under the action of correspondences
between smooth projective varieties over $\C$.

\medskip
\noindent
{\rm (v)} Let $Gr_F^{\nu} :=  F^{\nu}/F^{\nu +1}$ and  
assume that the K\"unneth components of the diagonal
class $[\Delta_X] = \oplus_{p+q = 2d}[\Delta_X(p,q)] \in
H^{2d}(X\times X,\QQ(d)))$ are algebraic.  Then
\[
\Delta_X(2d-2r+\ell,2r-\ell)_{\ast}\big\vert_{Gr_F^{\nu}\CH^r(X;\QQ)}
= \delta_{\ell,\nu}\cdot \text{\rm Identity}.
\]
[If we assume the conjecture that homological and numerical equivalence coincide,
then {\rm (v)} says that $Gr_F^{\nu}$ factors through the Grothendieck motive.]

\medskip
\noindent
{\rm (vi)} Let $D^r(X) := \bigcap_{\nu}F^{\nu}$.
 If the HC, and the Bloch-Beilinson conjecture (BBC) on the injectivity of the Abel-Jacobi map ($\otimes\QQ$)
holds for smooth projective varieties defined over $\ol{\QQ}$, then $D^r(X) = 0$.
\end{theorem}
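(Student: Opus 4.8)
The plan is to construct the filtration by the method of spreads, reducing the transcendental data of a complex cycle to the arithmetic geometry of a model defined over $\ol{\QQ}$. First I would fix $\xi \in \CH^r(X;\QQ)$ and choose a subfield $k \subset \C$, finitely generated over $\ol{\QQ}$, over which both $X$ and $\xi$ are defined; realizing $k$ as the function field of a smooth quasi-projective variety $S/\ol{\QQ}$ yields a smooth projective family $\rho: \bcX \to S$ with generic fibre $X_k$, together with a class $\wt{\xi} \in \CH^r(\bcX;\QQ)$ restricting to $\xi$. The cycle class $[\wt{\xi}] \in H^{2r}(\bcX,\QQ(r))$ carries the Leray filtration $L^{\bullet}$ attached to $\rho$, and I would declare $\xi \in F^{\nu}$ precisely when the image of $[\wt{\xi}]$ lies in $L^{\nu}H^{2r}$, after passing to the limit over all admissible spreads so as to kill the ambiguity coming from the base $S$. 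The first task is therefore to verify that this prescription is independent of the choice of spread: shrinking $S$, passing to a dominating $S'$, and the behaviour of the Leray filtration under such base change must all be reconciled, and the clean formulation emerges only in the colimit.

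Granting well-definedness, the formal properties follow from standard features of the Leray spectral sequence. Property (i) holds because $L^1 H^{2r}$ is exactly the kernel of restriction to a fibre, so $F^1$ is the group of cycles homologically trivial on $X$; property (ii) is the statement that $L^2$ lands in the part of $H^{2r}$ detecting the Griffiths invariant, so that membership in $F^2$ forces the $AJ$ class to vanish. Multiplicativity (iii) is inherited from the multiplicativity of $L^{\bullet}$ under cup product together with the compatibility of the intersection product on $\bcX$ with that on the fibres. Functoriality under correspondences (iv) follows by spreading out a correspondence $\Gamma$ between $X$ and $Y$ to a relative correspondence over a common base and using that the induced map on cohomology respects $L^{\bullet}$ up to the appropriate degree shift. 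For (v) I would invoke algebraicity of the K\"unneth components to realize the projectors $\Delta_X(p,q)$ as genuine correspondences; by (iv) these act on each $\Gr_F^{\nu}$, and a Leray-degree bookkeeping shows that $\Delta_X(2d-2r+\ell,2r-\ell)$ acts as the identity on $\Gr_F^{\nu}$ exactly when $\ell = \nu$ and as zero otherwise, which is the asserted Kronecker-delta identity.

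The hard part will be property (vi): showing $D^r(X) = \bigcap_{\nu}F^{\nu} = 0$ under the HC and BBC. The strategy is that a class lying in every $F^{\nu}$ admits a spread whose cohomology class is Leray-trivial in all degrees, hence cohomologically trivial on $\bcX$; spreading back and applying the BBC over $\ol{\QQ}$, where the Abel-Jacobi map is injective, together with the HC, which supplies the algebraic projectors needed to descend from $\bcX$ to the generic fibre and to control the graded pieces, one concludes that $\xi$ itself vanishes rationally. The delicate point throughout is that all of this must be made uniform across the directed system of spreads, since the conjectural inputs are available only over number fields while the cycle lives over $\C$; controlling this passage to the limit, rather than any single cohomological computation, is the real obstacle.
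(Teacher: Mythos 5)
Your skeleton---$\ol{\QQ}$-spreads $\rho:\bcX\to\mathcal{S}$, the Leray filtration attached to $\rho$, direct limits over shrinkings of the base and over subfields of $\C$, correspondences for (iv) and K\"unneth projectors for (v)---is exactly the skeleton of the construction from \cite{JL} that the paper recalls. But you feed the wrong cohomology theory into it, and this is fatal for (ii) and (vi). You define $F^{\nu}$ by the position of the Betti class $[\wt{\xi}]\in H^{2r}(\bcX,\QQ(r))$ in the Leray filtration. Test this on a cycle already defined over $\ol{\QQ}$: then one admissible spread is the trivial one, $\mathcal{S}=\Spec\,\ol{\QQ}$, $\bcX=X$, and every other spread is dominated by a constant family $X\times S$ carrying the constant cycle, so for a homologically trivial $\xi$ the Betti class vanishes identically in \emph{every} spread and your prescription places $\xi$ in $F^{\nu}$ for all $\nu$; the limit over spreads cannot repair this. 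Concretely, take $E/\ol{\QQ}$ an elliptic curve of positive Mordell--Weil rank and $\xi=p-q$ non-torsion: under your definition $\xi\in D^1(E)=\bigcap_{\nu}F^{\nu}$, yet $\xi\neq 0$ in $\CH^1(E;\QQ)$ and $AJ\otimes\QQ(\xi)\neq 0$. So (ii) fails ($\xi\in F^2$ but $AJ(\xi)\neq 0$) and (vi) fails unconditionally (for divisors on curves nothing here is conjectural). The problem is structural: the Betti class of the spread carries no Abel--Jacobi information, so the BBC---a statement about injectivity of $AJ$ over $\ol{\QQ}$---can never be brought to bear, and your sketch of (vi) (``cohomologically trivial on $\bcX$, then apply BBC'') has no bridge between its two halves.

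The repair is precisely what the paper's discussion after the theorem specifies: the Leray filtration must be taken not on $H^{2r}(\bcX,\QQ(r))$ but on the lowest weight part $\underline{H}^{2r}_{\mathcal H}(\bcX,\QQ(r))$ of Beilinson's absolute Hodge cohomology, i.e.\ the image of $H^{2r}_{\mathcal H}(\ol{\bcX},\QQ(r))\to H^{2r}_{\mathcal H}(\bcX,\QQ(r))$ for a smooth compactification $\ol{\bcX}$, and $\mathcal{F}^{\nu}$ is the pullback of that filtration under the cycle class map $\CH^r(\bcX;\QQ)\to \underline{H}^{2r}_{\mathcal H}(\bcX,\QQ(r))$. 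Because of the short exact sequence
\[
0\to J\big(H^{2r-1}(\bcX,\QQ(r))\big)\to H^{2r}_{\mathcal H}(\bcX,\QQ(r))\to \Gamma\big(H^{2r}(\bcX,\QQ(r))\big)\to 0,
\]
this class records the Abel--Jacobi invariant as well as the Betti class: in the trivial-spread test above one then correctly gets $F^1=\CH^r_{\hom}$ and $F^2\subseteq\ker AJ\otimes\QQ$, and HC $+$ BBC conjecturally make the cycle class map into $\underline{H}^{2r}_{\mathcal H}$ injective, which is exactly what yields $D^r(X)=0$ in (vi). Your arguments for (i), (iii), (iv), (v) survive essentially verbatim once transported to this setting, with the graded pieces $E_{\infty}^{\nu,2r-\nu}(\rho)$ now being extensions of $\Gamma\big(H^{\nu}(\mathcal{S},R^{2r-\nu}\rho_{*}\QQ(r))\big)$ by a quotient of $J\big(H^{\nu-1}(\mathcal{S},R^{2r-\nu}\rho_{*}\QQ(r))\big)$, as displayed in the paper.
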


It is essential to briefly explain how this filtration comes about.
Consider a $\ol{\QQ}$-spread $\rho : \mathcal{X} \to \mathcal{S}$, where
$\rho$ is smooth and proper.  Let $\eta$
be the generic point of $\mathcal{S}$, and put $K := \ol{\QQ}(\eta)$. Write
$X_K := \mathcal{X}_\eta$. We introduced a 
decreasing filtration $\mathcal{F}^\nu\CH^r(\mathcal{X};\QQ)$, with the
property that   $Gr_\mathcal{F}^\nu\CH^r(\mathcal{X};\QQ) \hookrightarrow 
E_{\infty}^{\nu,2r-\nu}(\rho)$, (no conjectures used here!), where $E_{\infty}^{\nu,2r-\nu}(\rho)$
is the $\nu$-th graded piece of the Leray filtration on
the lowest weight part $\underline{H}^{2r}_{\mathcal H}(\mathcal{X},\QQ(r))$ 
of Beilinson's absolute Hodge 
cohomology $H^{2r}_{\mathcal H}(\mathcal{X},\QQ(r))$ associated to $\rho$.
That lowest weight part $\underline{H}^{2r}_{\mathcal H}(\mathcal{X},\QQ(r))
\subset {H}^{2r}_{\mathcal H}(\mathcal{X},\QQ(r))$ is given by the image
$H^{2r}_{\mathcal H}(\ol{\mathcal{X}},\QQ(r)) \to H^{2r}_{\mathcal H}(\mathcal{X},\QQ(r))$,
where $\ol{\mathcal{X}}$ is a smooth compactification of $\mathcal{X}$. There
is a cycle class map $\CH^{r}(\mathcal{X};\QQ)
:= \CH^{r}(\mathcal{X}/\ol{\QQ};\QQ) \to 
\underline{H}^{2r}_{\mathcal H}(\mathcal{X},\QQ(r))$, which is conjecturally
injective under the BBC  $+$ HC conjectures,
using the fact that there is a short exact sequence:
\[
0 \to J\big(H^{2r-1}(\mathcal{X},\QQ(r))\big)\to  {H}^{2r}_{\mathcal H}(\mathcal{X},\QQ(r)) \to
\Gamma\big(H^{2r}(\mathcal{X},\QQ(r))\big) \to 0.
\]
(Injectivity would imply $D^{r}(X) = 0$.)
Regardless of whether or not injectivity holds, the 
filtration $\mathcal{F}^\nu\CH^r(\mathcal{X};\QQ)$ is given by the
pullback of the Leray filtration on $\underline{H}^{2r}_{\mathcal H}(\mathcal{X},\QQ(r))$ to
$\CH^{r}(\mathcal{X};\QQ)$. 
The term $E_{\infty}^{\nu,2r-\nu}(\rho)$
fits in a short exact sequence:
$$
0\to \underline{E}_{\infty}^{\nu,2r-\nu}(\rho) \to E_{\infty}^{\nu,2r-\nu}(\rho)
\to \underline{\underline{E}}_{\infty}^{\nu,2r-\nu}(\rho) \to 0,
$$
where
\[
\underline{\underline{E}}_{\infty}^{\nu,2r-\nu}(\rho) =
\Gamma\big(H^\nu(\mathcal{S},R^{2r-\nu}\rho_{\ast}\QQ(r))\big),
\]
\[
\underline{E}_{\infty}^{\nu,2r-\nu}(\rho) = \frac{J\big(
W_{-1}H^{\nu-1}(\mathcal{S},R^{2r-\nu}\rho_{\ast}\QQ(r))\big)}{\Gamma\big(Gr_W^0H^{\nu-1}
(\mathcal{S},R^{2r-\nu}\rho_{\ast}\QQ(r))\big)}
\subset J\big(H^{\nu-1}(\mathcal{S},R^{2r-\nu}\rho_{\ast}\QQ(r))\big).
\]
[Here the latter inclusion is a result of the short exact sequence:
$$
W_{-1}H^{\nu-1}(\mathcal{S},R^{2r-\nu}\rho_{\ast}\QQ(r)) \hookrightarrow 
W_0H^{\nu-1}(\mathcal{S},R^{2r-\nu}\rho_{\ast}\QQ(r))
\twoheadrightarrow Gr_W^0H^{\nu-1}(\mathcal{S},R^{2r-\nu}\rho_{\ast}\QQ(r)).]
$$
One then has (by definition)
\[
F^{\nu}\CH^r(X_K;\QQ) =\lim_{\buildrel \to\over 
{U\subset S/\ol{\QQ}}}\mathcal{F}^\nu\CH^r(\mathcal{X}_U;\QQ), \quad \mathcal{X}_{U} := \rho^{-1}(U)
\]
\[
F^{\nu}\CH^r(X_{\C};\QQ) = \lim_{\buildrel \to
\over {K\subset \C}}F^{\nu}\CH^r(X_K;\QQ)
\]
Further, since direct limits preserve exactness,
\[
Gr_F^{\nu}\CH^r(X_K;\QQ) =
\lim_{\buildrel \to\over {U\subset S/\ol{\QQ}}}Gr_{\mathcal{F}}^\nu\CH^r(\mathcal{X}_{U};\QQ),
\]
\[
Gr_F^{\nu}\CH^r(X_{\C};\QQ) = \lim_{\buildrel \to\over 
{K\subset \C}}Gr_F^{\nu}\CH^r(X_K;\QQ)
\]

\subsection{}

Now let $j : D \hookrightarrow X$ be an inclusion of smooth irreducible projective varieties, with $D$ ample and of codimension $1$. 
The weak Lefschetz theorem implies that $j^* : H^i(X,\ZZ) \to H^i(D,\ZZ)$ is an isomorphism if $i < \dim D$ and injective for $i = \dim D$.
If we set $i = 2r-\nu$, then the statement $2r < \dim D$ implies that $2r-\nu \leq \dim D-1$ for $0\leq \nu\leq r$.  Then by Theorem \ref{TMot},
and under the assumption of the HC and BBC:
\[
r \leq \biggl[\frac{\dim D -1}{2}\biggr] \Rightarrow j^* : Gr_F^{\nu}\CH^r(X;\QQ)\xrightarrow{\sim} Gr_F^{\nu}\CH^r(D;\QQ), \ \forall \nu = 0,...,r
\]
\[
\Rightarrow j^* : \CH^r(X;\QQ) \xrightarrow{\sim}\CH^r(D;\QQ),
\]
by downward induction. This incidentally, provides the motivic 
interpretation of Conjecture \ref{Par}.\footnote{We also remark in passing that under the same conjectural assumptions
and argument, we have
\[
r \leq \biggl[\frac{\dim D -1+\nu}{2}\biggr] \Rightarrow j^* : Gr_F^{\ell}\CH^r(X;\QQ)\xrightarrow{\sim} Gr_F^{\ell}\CH^r(D;\QQ), \
\forall \ell = \nu,...,r
\]
\[
\Rightarrow j^* : F^{\nu}\CH^r(X;\QQ) \xrightarrow{\sim}F^{\nu}\CH^r(D;\QQ).
\]}

Let $(j^*)^{-1}: \CH^r(D;\QQ) \xrightarrow{\sim}\CH^r(X;\QQ)$ be the inverse map. It is
clearly cycle induced by the HC applied to the isomorphism of Hodge structures:
\[
[j^*]^{-1} : \bigoplus_{\nu=0}^r H^{2r-\nu}(D,\QQ) \xrightarrow{\sim} \bigoplus_{\nu=0}^r H^{2r-\nu}(X,\QQ).
\]
\big[\underline{Explicit}: Apply the Hodge conjecture to 
\[
\Gamma\biggl(\bigoplus_{\nu=0}^r H^{2\dim D -2r+\nu}(D,\QQ)\otimes H^{2r-\nu}(X,\QQ)\big(\dim D\big)\biggr).\big]
\]
 One clearly has a commutative diagram;
 \begin{equation}\label{JL1}
 \begin{matrix} \CH^r(D;\QQ)&\begin{matrix} \xrightarrow{\quad(j^*)^{-1}\quad \sim\quad}\\
 \xleftarrow{\quad j^*\quad\sim\quad}\end{matrix}&\CH^r(X;\QQ)\\
 &\\
 j_*\searrow&&\swarrow j_*\circ j^*\\
 &\\
 & \CH^{r+1}(X)
 \end{matrix}
 \end{equation}
 moreover $j_*\circ j^* = \cup \{D\}$. Since $j:D \hookrightarrow X$ is ample, it follows that
 for $2r < \dim X$, $j_*\circ j^* : H^{2r-\nu}(X,\QQ) \to H^{2(r+1)-\nu}(X,\QQ)$ is injective.
 Now working with the diagram:
 \begin{equation}\label{JL2}
 \begin{matrix} 0&\to&F^{\nu+1}\CH^r(X;\QQ)&\to&F^{\nu}\CH^r(X;\QQ)&\to&Gr_F^{\nu}\CH^r(X;\QQ)&\to&0\\
 &\\
&& j_*\circ j^*\biggl\downarrow\quad&& j_*\circ j^*\biggl\downarrow\quad&& j_*\circ j^*\biggl\downarrow\quad\\
&\\
0&\to&F^{\nu+1}\CH^{r+1}(X;\QQ)&\to&F^{\nu}\CH^{r+1}(X;\QQ)&\to&Gr_F^{\nu}\CH^{r+1}X;\QQ)&\to&0
 \end{matrix}
 \end{equation}
 it follows that if the left and right vertical arrows in diagram (\ref{JL2}) are injective, then so is the middle. By downward
 induction on $\nu$, we deduce from the BB filtration that $j_*\circ j^*$ in diagram (\ref{JL1}) is injective, {\it a fortiori}
 $j_*$ is injective in (\ref{JL1}). Now let $k = d-1-r = \dim D -r$. Then we have $j_* : \CH_k(D;\QQ) \to \CH_k(X;\QQ)$ injective,
 provided $k > \dim D/2$. Quite generally, one can show the following:
 
 \begin{theorem}\label{MainThmJL} Assume the Hodge (HC) and Bloch-Beilinson (BBC) conjectures. Then:
 \[
 k > \frac{\dim D -\nu}{2} \Rightarrow j_*:F^{\nu}\CH_k(D;\QQ) \hookrightarrow F^{\nu}\CH_k(X;\QQ).
 \]
 \end{theorem}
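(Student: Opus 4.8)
The plan is to transcribe the argument just given for the full Chow group (the case $\nu=0$) into the filtered setting, replacing the restriction isomorphism $j^*:\CH^r(X;\QQ)\xrightarrow{\sim}\CH^r(D;\QQ)$ by the graded refinement recorded in the footnote, and then running the same injectivity-propagation only over the filtration levels $\ell\geq\nu$. First I would fix the re-indexing: writing $r=\dim D-k$, one has $\CH_k(D)=\CH^r(D)$ and $\CH_k(X)=\CH^{r+1}(X)$, and the hypothesis $k>\frac{\dim D-\nu}{2}$ is exactly the inequality $2r-\nu<\dim D$. So it suffices to prove
\[
2r-\nu<\dim D \Longrightarrow j_*:F^{\nu}\CH^r(D;\QQ)\hookrightarrow F^{\nu}\CH^{r+1}(X;\QQ).
\]
Both $j_*$ and $j^*$ preserve the filtration by Theorem \ref{TMot}(iv), and since $\{D\}\in F^0\CH^1(X;\QQ)$, the operator $\cup\{D\}=j_*\circ j^*$ carries $F^{\nu}\CH^r(X;\QQ)$ into $F^{\nu}\CH^{r+1}(X;\QQ)$ by Theorem \ref{TMot}(iii).

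Next I would invoke the filtered restriction isomorphism. Under HC $+$ BBC, weak Lefschetz gives $j^*:H^{2r-\ell}(X,\QQ)\xrightarrow{\sim}H^{2r-\ell}(D,\QQ)$ whenever $2r-\ell<\dim D$; since $\ell$ ranges over $\nu,\dots,r$ and the binding case is $\ell=\nu$, our hypothesis $2r-\nu<\dim D$ yields $j^*:Gr_F^{\ell}\CH^r(X;\QQ)\xrightarrow{\sim}Gr_F^{\ell}\CH^r(D;\QQ)$ for every $\ell\geq\nu$, and hence, by the same iterated-extension bookkeeping as in the footnote,
\[
j^*:F^{\nu}\CH^r(X;\QQ)\xrightarrow{\ \sim\ }F^{\nu}\CH^r(D;\QQ).
\]

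The core step is to show that $\cup\{D\}=j_*\circ j^*$ is injective on $F^{\nu}\CH^r(X;\QQ)$. I would argue by downward induction on $\ell$ from $\ell=r$ to $\ell=\nu$, applying the injectivity propagation of diagram (\ref{JL2}) (written for each level $\ell$ in place of a single $\nu$): if the outer verticals are injective, so is the middle. The base case $\ell=r$ is the vanishing $F^{r+1}\CH^r(X;\QQ)=D^r(X)=0$, valid by Theorem \ref{TMot}(vi) under HC $+$ BBC, so the left vertical is trivially injective there. For the right vertical at each level, injectivity of $\cup\{D\}$ on $Gr_F^{\ell}\CH^r(X;\QQ)$ is controlled cohomologically: by Theorem \ref{TMot}(v) (and the standing assumption that homological and numerical equivalence coincide) the action on $Gr_F^{\ell}$ is governed by the K\"unneth piece $H^{2r-\ell}(X,\QQ)$, on which $\cup\{D\}:H^{2r-\ell}(X,\QQ)\to H^{2r-\ell+2}(X,\QQ)$ is injective by hard Lefschetz as soon as $2r-\ell<\dim X$. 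Since $2r-\ell\leq 2r-\nu<\dim D<\dim X$ for all $\ell\geq\nu$, the right verticals are injective throughout, and the induction delivers injectivity of $\cup\{D\}$ on $F^{\nu}\CH^r(X;\QQ)$.

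Finally I would conclude by a one-line chase: given $v\in F^{\nu}\CH^r(D;\QQ)$ with $j_*v=0$, surjectivity of $j^*$ writes $v=j^*u$ with $u\in F^{\nu}\CH^r(X;\QQ)$, whence $\cup\{D\}u=j_*j^*u=0$, so $u=0$ and $v=0$; after re-indexing this is precisely the asserted injectivity. I expect the only genuinely non-formal point to be the right vertical of the induction: passing from purely cohomological hard Lefschetz injectivity to injectivity on the graded Chow pieces $Gr_F^{\ell}\CH^r(X;\QQ)$ rests essentially on property (v) of the Bloch--Beilinson filtration together with homological $=$ numerical equivalence. Everything else is the same weak/hard Lefschetz bookkeeping already used for $\nu=0$, now truncated at level $\nu$; one should merely double-check that the weak-Lefschetz range $2r-\nu<\dim D$ of Step~A is the binding constraint and automatically secures the hard-Lefschetz range $2r-\nu<\dim X$ of the core step.
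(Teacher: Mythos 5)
Your proposal is correct and follows essentially the same route as the paper: the theorem there is stated as the level-$\nu$ truncation of the $\nu=0$ argument just given, namely the filtered weak-Lefschetz restriction isomorphism $j^*:F^{\nu}\CH^r(X;\QQ)\xrightarrow{\sim}F^{\nu}\CH^r(D;\QQ)$ from the footnote, injectivity of $\cup\{D\}=j_*\circ j^*$ on $F^{\nu}$ obtained by downward induction through diagram (\ref{JL2}) with base case $D^r(X)=0$ from Theorem \ref{TMot}(vi), and the final chase through diagram (\ref{JL1}). Your explicit flagging of the graded-level step (hard Lefschetz on $H^{2r-\ell}$ passing to $Gr_F^{\ell}\CH^r$ via property (v) and homological $=$ numerical equivalence) matches exactly what the paper tacitly relies on.
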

 Recall that under the assumptions, the BB-filtration is the same as Lewis' filtration.
 
 Now if we allow the injective statement $j^*: H^{d-1}(X,\QQ) \hookrightarrow H^{d-1}(D,\QQ)$, then in diagram (\ref{JL1}),
 $j^*$ is injective with left inverse $(j^*)^{-1}$. Then $2k = 2\dim D -2r \geq \dim D > \dim D -1$, i.e. $k > \frac{\dim D -1}{2}$,
but a caveat is in order here as $(j^*)^{-1}$ is not injective. We can get around this by restricting to null-homologous cycles,
via the above theorem for $\nu=1$.

The next 3 examples illustrate what can happen if
\[
\frac{\dim D -1}{2} < k \leq \frac{\dim D}{2},
\]
thus indicating that the inequality in Conjecture \ref{Par2}, is effective.

 \begin{example} Let $j:D \hookrightarrow X$ be a finite set of points defining an ample divisor on a smooth curve $X$. We assume that
 $D$ supports a zero cycle that is rationally equivalent to zero on $X$. Obviously $j_* : \CH_0(D;\QQ) \to \CH_0(X;\QQ)$ is not
 injective, and yet $k = 0 = (\dim D)/{2}$.
 \end{example}
 
 \begin{example}
 Let $j : D\hookrightarrow X := \PR^3$ be a smooth surface with Picard rank $\rho >1$, such as a Fermat surface of degree $\geq 2$. 
 Note that $\CH_1(D;\QQ) \simeq \QQ^{\rho}$ and $\CH_1(X;\QQ)
 \simeq \QQ$. Thus $j_* : \CH_1(D;\QQ) = F^0\CH_1(D;\QQ)\to F^0\CH_1(X;\QQ)=\CH_1(X;\QQ)$ is not injective. 
 Here $k = 1 = (\dim D)/2$ and $\nu = 0$. If $k=1 = \nu$, then
$ j_* : F^1\CH_1(D;\QQ) = \CH_{1,\hom}(D;\QQ)\to \CH_{1,\hom}(X;\QQ)=F^1\CH_1(X;\QQ)$ is
trivially injective since $ \CH_{1,\hom}(D;\QQ) = 0$. Here $k = 1 > (\dim D - 1)/2$.
 \end{example}
 
 \begin{example} Let $D =$ Fermat quintic in $\PR^5 =: X$. Let $\xi = L_1-L_2\in \CH_2(D;\QQ)$, a difference of
 two nonhomologous planes in $D$. Then $j_*(\xi) = 0$. Here $k=2 = (\dim D)/2$.
\end{example}

Regarding Conjecture 1.1, if $n = \dim X$ then we require $p<n-1$ for an isomorphism and $p=n-1$ for an injection.
[Consider the fact that $\CH^n(D) = 0$, and yet $\CH^n(X)$ can be highly nontrivial.]

\subsection{Higher Chow analogues}\label{highconjecture}

From the works of M. Saito and M. Asakura (see \cite{AS}), Theorem \ref{TMot} naturally extends to the higher Chow groups. In
particular, if one assumes the HC, together with a generalized version of the BBC, viz.,

\begin{conjecture}\label{BBC'} Let $W/\ol{\QQ}$ be a smooth projective variety. Then the Abel-Jacobi map
\[
\CH^r_{\hom}(W/\ol{\QQ},m;\QQ) \to J\big(H^{2r-m-1}(W,\QQ(r))\big),
\]
is injective;
\end{conjecture}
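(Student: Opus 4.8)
The plan is to recast the Abel--Jacobi map as a regulator into Beilinson's absolute Hodge cohomology and then to locate precisely where the arithmetic hypothesis $W/\ol\QQ$ must intervene. First I would note that, since $W$ is smooth projective, $H^{2r-m}(W,\QQ(r))$ is pure of weight $-m$; for $m>0$ this weight is nonzero, so $\Gamma\big(H^{2r-m}(W,\QQ(r))\big)=0$. In the short exact sequence
\[
0 \to J\big(H^{2r-m-1}(W,\QQ(r))\big)\to H^{2r-m}_{\mathcal H}(W,\QQ(r)) \to \Gamma\big(H^{2r-m}(W,\QQ(r))\big) \to 0
\]
the third term therefore vanishes, every class of $\CH^r(W/\ol\QQ,m;\QQ)$ is automatically homologically trivial, and the map in the statement is identified with the full regulator
\[
\mathrm{reg} : \CH^r(W/\ol\QQ,m;\QQ)\longrightarrow H^{2r-m}_{\mathcal H}(W,\QQ(r))\cong J\big(H^{2r-m-1}(W,\QQ(r))\big).
\]
In this form, Conjecture \ref{BBC'} (for $m>0$) is exactly the injectivity of $\mathrm{reg}$, while for $m=0$ one recovers the classical Bloch--Beilinson conjecture after projecting onto $\Gamma$ via the HC.

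Having made this reduction, the natural instinct is to feed it into the spreading construction behind Theorem \ref{TMot}, in the Saito--Asakura form (\cite{AS}). Since $W$ is \emph{already} defined over $\ol\QQ$, one takes the spread $\rho:\mathcal X\to\mathcal S$ to have zero-dimensional base $\mathcal S=\Spec\ol\QQ$ and $\mathcal X=W$. The graded pieces $E_\infty^{\nu,2r-m-\nu}(\rho)$ then vanish for $\nu\geq 2$, being built from $H^{\geq 1}$ of a point; the piece $\nu=0$ vanishes too, since it is $\Gamma\big(H^{2r-m}(W,\QQ(r))\big)=0$ for $m>0$; and the only survivor is
\[
E_\infty^{1,2r-m-1}(\rho)=J\big(H^{2r-m-1}(W,\QQ(r))\big),
\]
into which $Gr_F^1\CH^r(W/\ol\QQ,m;\QQ)=F^1/F^2$ injects, the injection being precisely $\mathrm{reg}$. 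It follows that $\ker(\mathrm{reg})=F^2=\bigcap_\nu F^\nu=:D$, so the sought injectivity is equivalent to the vanishing $D=0$. But in the higher-Chow setting that vanishing is exactly the content of the Saito--Asakura extension of part {\rm (vi)} of Theorem \ref{TMot}, which is \emph{deduced} from Conjecture \ref{BBC'} itself together with the HC. The machinery therefore reproduces the statement rather than establishing it.

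The upshot is that the hard part is not a reduction but the conjecture itself: there is no known general method for proving that the higher regulator into absolute Hodge cohomology is injective on $\ol\QQ$-varieties, and every attempt to leverage the filtration circles back to the same assertion. I would accordingly expect unconditional progress only where genuine arithmetic input is available---for $r=1$, where $\mathrm{reg}$ is the classical Abel--Jacobi isomorphism (Abel's theorem) and injectivity is automatic; for $W=\Spec\ol\QQ$ and its higher $K$-groups, where Borel's theorem yields injectivity of the regulator; and for a handful of low-weight or explicitly constructed families. In the absence of such input the statement must be retained as a hypothesis, to be invoked alongside the HC in the higher-Chow extension of Theorem \ref{TMot} developed in what follows.
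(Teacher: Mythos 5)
You have correctly identified that this statement is a conjecture --- a higher-Chow generalization of the Bloch--Beilinson conjecture --- which the paper states purely as a hypothesis and never proves: it is only invoked, together with the HC, to obtain the BB filtration on higher Chow groups and the theorem following it in \S\ref{highconjecture}. Your supporting analysis is sound and consistent with the paper's own remarks (the weight argument giving $\Gamma\big(H^{2r-m}(W,\QQ(r))\big)=0$ for $m>0$, and the observation that the spread/filtration machinery cannot prove the statement because the vanishing of $D^r = \bigcap_{\nu}F^{\nu}$ is itself deduced from BBC $+$ HC, making any such argument circular), so declining to manufacture a proof is exactly the right conclusion.
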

then for $X/\C$ smooth projective of dimension $d$, there is a (unique) BB filtration
\[
\{F^{\nu}\CH^r(X,m;\QQ)\}_{\nu=0}^{r},
\]
  for which the $\nu$-th graded piece
\[
Gr_F^{\nu}\CH^r(X,m;\QQ) \simeq \Delta_{X}(2d -2r+m+\nu,2r-m-\nu)_*\CH^r(X,m;\QQ).
\]

\begin{theorem} Let us assume Conjecture \ref{BBC'} and the HC. Then
\[
j^* : \CH^r(X,m;\QQ) \xrightarrow{\sim} \CH^r(D,m;\QQ),
\]
for
\[
r\leq \frac{\dim D +m-1}{2}; \quad {\rm moreover,}
\]
 \[
 k > \frac{\dim D -\nu +m}{2} \Rightarrow j_*:F^{\nu}\CH_k(D,m;\QQ) \hookrightarrow F^{\nu}\CH_k(X,m;\QQ).
 \]
\end{theorem}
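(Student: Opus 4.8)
The plan is to transcribe the argument of \S\ref{motive} essentially verbatim, replacing the Bloch--Beilinson filtration of Theorem \ref{TMot} by its higher-Chow refinement due to Saito and Asakura (\cite{AS}), whose defining property is that the graded piece $Gr_F^{\nu}\CH^r(X,m;\QQ)$ is cut out by the correspondence $\Delta_X(2d-2r+m+\nu,2r-m-\nu)_{\ast}$ and is therefore governed by the cohomology $H^{2r-m-\nu}(X,\QQ)$. As in the classical case this filtration is expected to be functorial for correspondences between smooth projective varieties, and in particular for $j^*$, $j_*$, and for the cycle-induced inverse $(j^*)^{-1}$ produced by the Hodge conjecture. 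The two working diagrams (\ref{JL1}) and (\ref{JL2}) then persist word for word, with $\CH^r(-)$ replaced by $\CH^r(-,m)$; only the cohomological degrees carrying the graded pieces are shifted by $-m$.

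For the pullback isomorphism I would first invoke the weak Lefschetz theorem $j^*:H^i(X,\QQ)\xrightarrow{\sim}H^i(D,\QQ)$ for $i<\dim D$. Setting $i=2r-m-\nu$, the hypothesis $r\le \frac{\dim D+m-1}{2}$ gives $2r-m\le \dim D-1$, whence $2r-m-\nu\le \dim D-1<\dim D$ for every $\nu\ge 0$. By the functoriality of the higher-Chow filtration this forces $j^*:Gr_F^{\nu}\CH^r(X,m;\QQ)\xrightarrow{\sim}Gr_F^{\nu}\CH^r(D,m;\QQ)$ for all $\nu=0,\dots,r$, and a downward induction on $\nu$ through the short exact sequences $0\to F^{\nu+1}\to F^{\nu}\to Gr_F^{\nu}\to 0$ upgrades this to the asserted isomorphism $j^*:\CH^r(X,m;\QQ)\xrightarrow{\sim}\CH^r(D,m;\QQ)$.

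For the push-forward statement I would, exactly as in diagram (\ref{JL1}), build $(j^*)^{-1}$ from the HC and form $j_*\circ j^*=\cup\{D\}$. On the $\nu$-th graded piece this is the Lefschetz operator $H^{2r-m-\nu}(X,\QQ)\to H^{2r-m-\nu+2}(X,\QQ)$, which is injective as soon as $2r-m-\nu\le \dim X-1$. Writing the dimension index as $k=\dim D+m-r$, this inequality becomes $2k\ge \dim D+m-\nu$, so the strict bound $k>\frac{\dim D-\nu+m}{2}$ suffices. Feeding this into the analogue of diagram (\ref{JL2}) and running a downward induction on $\nu$ shows that $j_*\circ j^*$ is injective on $F^{\nu}\CH_k$, whence $j_*$ itself is injective on $F^{\nu}\CH_k(D,m;\QQ)$; as in the classical case the non-injectivity of $(j^*)^{-1}$ is harmless once one has restricted to a filtration level $F^{\nu}$ with $\nu\ge 1$.

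The main obstacle is not the index bookkeeping above but the input it rests on: one must know that the Saito--Asakura higher-Chow filtration genuinely satisfies the full analogue of Theorem \ref{TMot}(iv)--(v), i.e. that correspondences act on $Gr_F^{\nu}\CH^r(X,m;\QQ)$ through the component $\Delta_X(2d-2r+m+\nu,2r-m-\nu)$ and hence through $H^{2r-m-\nu}$, and that the cycle-induced $(j^*)^{-1}$ coming from the HC respects this filtration. This functoriality --- equivalently, the compatibility of the filtration with the Leray filtration on the associated absolute Hodge (Deligne) cohomology of a $\ol{\QQ}$-spread --- is exactly the place where Conjecture \ref{BBC'} together with the HC is needed, and is the technical heart of the argument.
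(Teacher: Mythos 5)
Your proposal is correct and follows essentially the same route as the paper's own proof, which is itself only a sketch stating that ``the idea of proof is virtually the same as when $m=0$, with a modification of indices'' via the Saito--Asakura mixed Hodge module extension of the BB filtration. Your index bookkeeping ($2r-m\le \dim D-1$ for the pullback isomorphism, and $k=\dim D+m-r$ turning the Lefschetz injectivity range into $k>\frac{\dim D+m-\nu}{2}$) matches the paper's computation exactly, and you correctly isolate the same conjectural input (functoriality of the higher-Chow filtration and its compatibility with the Leray filtration on the spread) that the paper relies on.
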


\begin{proof}(Sketch.) Using the theory of mixed Hodge modules \cite{AS}, the idea of proof is virtually the same as when $m=0$, with a modification of indices.
For instance, one is now dealing with a short exact sequence
$$
0\to \underline{E}_{\infty}^{\nu,2r-\nu-m}(\rho) \to E_{\infty}^{\nu,2r-\nu-m}(\rho)
\to \underline{\underline{E}}_{\infty}^{\nu,2r-\nu -m}(\rho) \to 0,
$$
where
\[
\underline{\underline{E}}_{\infty}^{\nu,2r-\nu-m}(\rho) =
\Gamma\big(H^\nu(\mathcal{S},R^{2r-\nu-m}\rho_{\ast}\QQ(r))\big),
\]
\[
\underline{E}_{\infty}^{\nu,2r-\nu-m}(\rho) = \frac{J\big(
W_{-1}H^{\nu-1}(\mathcal{S},R^{2r-\nu-m}\rho_{\ast}\QQ(r))\big)}{\Gamma\big(Gr_W^0H^{\nu-1}
(\mathcal{S},R^{2r-\nu-m}\rho_{\ast}\QQ(r))\big)}
\subset J\big(H^{\nu-1}(\mathcal{S},R^{2r-\nu-m}\rho_{\ast}\QQ(r))\big).
\]
The statement $j^* :H^{2r-\nu-m}(X,\QQ) \xrightarrow{\sim} H^{2r-\nu-m}(D,\QQ)$ holds for all $\nu = 0,...,r$ provided
that $2r-m \leq \dim D -1$, i.e. $r\leq \frac{\dim D + m-1}{2}$. Quite generally 
\[
r\leq \frac{m+\nu+\dim D -1}{2} \Rightarrow j^* : 
F^{\nu}\CH^r(X,m;\QQ) \xrightarrow{\sim} F^{\nu}\CH^r(D,m;\QQ).
\]
For the latter part of the theorem, observe that $\CH^r(D,m) = \CH_k(D,m)$, where
$k = \dim D +m -r$. Then
\[
r\leq \frac{m+\nu+\dim D -1}{2} \Leftrightarrow k \geq \frac{\dim D +m-\nu+1}{2} \Leftrightarrow k > \frac{\dim D +m-\nu}{2}.
\]
One then argues, as in the case $m=0$, that 
\[
k > \frac{\dim D +m-\nu}{2} \Rightarrow j_*:F^{\nu}\CH_k(D,m;\QQ) \hookrightarrow F^{\nu}\CH_k(X,m;\QQ).
\]
\end{proof}

\begin{example} Let $X = \PR^2$ and $j : D\hookrightarrow X$ an elliptic curve. We consider
the map $j_* : \CH_1(D,2;\QQ) \to \CH_1(\PR^2,2;\QQ)$. In this case $k=1$ is almost, but not quite in the range
of the above theorem, even in the event that $\nu=1$, where it is well-known that for $m\geq1$
that $F^0\CH^r(X,m;\QQ) = F^1\CH^r(X,m;\QQ)$, as $\Gamma H^{2r-m}(W,\QQ(r)) = 0$, for any
projective algebraic manifold $W$.
Note that $\CH_1(D,2;\QQ)  = \CH^2(D,2;\QQ)$ and $ \CH_1(\PR^2,2;\QQ) = \CH^3(\PR^2,2;\QQ)$.
We need the following terminology. Given a variety $Y/\C$, we denote by $\pi_Y: Y \to \bSpec(\C)$ the structure
map, and where appropriate, $L_Y$ is the operation of taking the intersection product with a hyperplane section 
of $Y$. Note that by a slight generalization of the Bloch-Quillen formula, $\CH^1(Y,2) = 0$ for smooth $Y$, and for dimension
reasons, $\CH^3(\bSpec(\C),2) = 0$. Thus by the projective bundle formula, $\CH^3(\PR^2,2) = 
L_{\PR^2}\cup\pi_{\PR^2}^{*}\CH^2(\bSpec(\C),2) \simeq \CH^2(\bSpec(\C),2)$. Note that $\pi_D^* : \CH^2(\bSpec(\C),2;\QQ) \to 
\CH^2(D,2;\QQ)$ is injective. This is because, up to multiplication by some $N\in \NN$, the left inverse is given by
$\pi_{D,*}\circ L_D$. There is a commutative diagram:
\[
\begin{matrix}
0\\
\uparrow\\
{\bf cok}\\
\uparrow\\
\CH^2(D,2;\QQ)&\xrightarrow{j_*}&\CH^3(\PR^2,2;\QQ)\\
\pi_D^*\uparrow\quad\  &&|\wr\\
\CH^2(\bSpec(\C),2;\QQ)&=&\CH^2(\bSpec(\C),2;\QQ)\\
\uparrow\\
0
\end{matrix}
\]
It is obvious that ${\bf cok} \ne 0$ is the obstruction to $j_*$ being injective, and
yet that is the case if $D$ is an elliptic curve. Note that if we accommodate the situation where
$k=2$, then we are looking at $j_*:0=\CH^1(D,2) =\CH_2(D,2) \to \CH_2(\PR^2,2) =
\CH^2(\PR^2,2) \simeq \CH^2(\bSpec(\C),2) = K_2(\C)$, which is clearly injective, albeit not surjective.
\end{example}

\begin{example}
Let $X = \PR^3$, and $j:D\hookrightarrow \PR^2$ a general $K3$ surface. The map $j_*:\CH^2(D,1;\QQ) = \CH_1(D,1;\QQ)\to \CH^3(\PR^3,1;\QQ)$
is not injective, due to the presence of ``indecomposables'' in $\CH^2(D,1;\QQ)$ \cite{C-L}. Notice that $k=1 \leq 
\frac{\dim D -\nu + m}{2} = \frac{3-\nu}{2}$, for $\nu = 0, 1$. If we conside a $k=2$ example, then we are looking at
$j_* : \CH^1(D,1) = \C^{\times}  \xrightarrow{=} \C^{\times} \simeq \CH^2(\PR^3,1)$, which is an isomorphism in this case,
{\it a fortiori} $j_*$ is injective.
\end{example}



\section{Inclusion of Theta divisor into the Jacobian}\label{Incltheta}

In this section we investigate the kernel of the push-forward homomorphism, induced by the closed embedding $j$ of the Theta divisor inside the Jacobian of a smooth projective curve $C$ of genus $g$. Recall that $\Theta$ is an ample divisor on $J(C)$. 
Consider the induced pushforward map on the rational Chow groups:
$$
j_*: \CH_k(\Theta;\QQ) \rightarrow \CH_k(J(C);\QQ)
$$
for $k\geq 0$.

 To investigate the map $j_*$, we use a similar comparison theorem (\cite{Collino}) on symmetric products of the curve $C$.
 
 Fix a point $P$ in $C$. 
Consider the following map $j_C$ from $\Sym^{g-1}C$ to $\Sym^g C$
defined by
$$
P_1+\cdots+ P_{g-1}\mapsto P_1+\cdots+P_{g-1}+P\;.
$$
Here the sum denotes the unordered set of points of lengths $(g-1)$ and $(g)$.

With this definition of $j_C$ the following diagram is commutative.
$$
  \diagram
  \Sym^{g-1} C \ar[dd]_-{q_{\Theta}} \ar[rr]^-{j_C} & & \Sym^g(C) \ar[dd]^-{q} \\ \\
  \Theta \ar[rr]^-{j} & & \Pic^g(C)
  \enddiagram
  $$
  
  We recall the structure of the birational morphisms $q_\Theta$ and $q$.
  \begin{lemma}\label{BN}
  Suppose $C$ is a smooth projective curve over the complex numbers.
  
  1) The morphism $q$ is a blow-up along the subvariety
  $$
  W_g^1:=\{ l\in \Pic^g(C) : h^0(l)\geq 2\}.
  $$
  Furthermore, the singular locus of $W_g^1$ is 
  $$
  W_g^2=\{l\in \Pic^g(C) : h^0(l)\geq 3\}.
  $$
  This is a Cohen-Macaulay and a normal variety. Hence codimension of $W_g^2$ in $W^1_g$ is at least two.
  
  Denote $B=Sing(\Theta) $, the singular locus of $\Theta$.
  
  2) Then $\dim B =g-4$, when $C$ is non-hyperelliptic and  is equal to $g-3$ if $C$ is hyperelliptic.
  
  3) We have the equality:
  $$
  B= W^1_{g-1}=\{l\in \Pic^{g-1}(C) : h^0(l)\geq 2) \}
  $$
  Furthermore, $B$ is a Cohen-Maculay, and a normal variety.
  
In particular {\rm codim}$(Sing(B))\geq 2$, i.e., the singular locus of $B$ has codimension at least $2$.
 
 4) The morphism $q$ is an isomorphisms on $\Sym^g(C) -q^{-1}(W^1_g)$ onto $J(C)-W^1_g$.
 The fibres over $W^1_g$ are projective spaces.
 
 5) The morphism $q_\Theta$ is an isomorphism on $\Sym^{g-1}(C)-q_{\Theta}^{-1}(W^1_{g-1})$. 
 The fibres over $W^1_{g-1}$ are projective spaces.
  \end{lemma}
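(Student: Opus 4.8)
The plan is to derive all five assertions from Brill--Noether theory, using that $q$ and $q_\Theta$ are Abel--Jacobi maps whose fibres are complete linear systems. First I would record the fibre description: for $l\in\Pic^d(C)$ the scheme-theoretic fibre of $q$ (resp.\ $q_\Theta$, with $d=g-1$) over $l$ is $|l|=\PR\big(H^0(C,l)\big)\cong\PR^{h^0(l)-1}$. By Riemann--Roch a general line bundle of degree $g$ has $h^0=1$, so $q$ is birational, and likewise $q_\Theta$ onto $\Theta=W_{g-1}$. This already yields the fibre statements of (4) and (5): the positive-dimensional fibres are exactly the projective spaces $\PR^{h^0(l)-1}$ sitting over the loci $W^1_g$ and $W^1_{g-1}$ where $h^0\ge 2$.

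For the isomorphism claims in (4) and (5), note that over $\Pic^g(C)\setminus W^1_g$ (resp.\ $\Theta\setminus W^1_{g-1}$) every fibre is a single reduced point. Thus $q$, resp.\ $q_\Theta$, restricts to a proper birational bijective morphism onto a normal variety --- smooth in the first case, and normal along its smooth locus in the second by (3) --- so Zariski's Main Theorem forces it to be an isomorphism. Part (3) itself I would obtain from the Riemann singularity theorem: identifying $\Theta$ with the locus $W_{g-1}=W^0_{g-1}\subset\Pic^{g-1}(C)$ of effective classes, one has $\operatorname{mult}_l\Theta=h^0(l)$, whence $\Sing\Theta=\{h^0(l)\ge 2\}=W^1_{g-1}$.

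For the dimension in (2) I would combine the two halves of Brill--Noether theory: existence gives $\dim W^1_{g-1}\ge\rho=g-4$, while Martens' theorem bounds $\dim W^1_{g-1}\le(g-1)-2=g-3$ and Mumford's refinement sharpens this to $\le g-4$ exactly when $C$ is non-hyperelliptic; in the hyperelliptic case the systems built from multiples of the $g^1_2$ realize the full dimension $g-3$. The Cohen--Macaulayness, the normality, and the identifications $\Sing W^1_g=W^2_g$, $\Sing W^1_{g-1}=W^2_{g-1}$ all come from the determinantal (Kempf) scheme structure on the $W^r_d$ as degeneracy loci of a map of vector bundles on $\Pic^d(C)$; the codimension-$\ge 2$ statements follow either from normality (Serre's condition $R_1$) or, directly, from the duality involution $l\mapsto K_C-l$ (which identifies $W^2_g$ with $W^1_{g-2}$) together with Martens' bound.

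The heart of the lemma, and the step I expect to be the main obstacle, is the blow-up assertion (1). Here I would verify the universal property of blowing up $\Pic^g(C)$ along $W^1_g$, taken with its determinantal scheme structure. The key points are: (i) the exceptional locus $E:=q^{-1}(W^1_g)$ has dimension $(g-2)+1=g-1$, hence is a Cartier divisor on the smooth variety $\Sym^g C$, so that $q^{-1}\mathcal I_{W^1_g}\cdot\mathcal O_{\Sym^g C}$ is invertible and the universal property furnishes a $\Pic^g(C)$-morphism $\phi:\Sym^g C\to\Bl_{W^1_g}\Pic^g(C)$; (ii) over the smooth part $W^1_g\setminus W^2_g$, where the centre is smooth of codimension $2$, the local analytic model shows $\phi$ is an isomorphism, the ordinary blow-up with fibre $\PR^1$; and (iii) proving that $\phi$ is an isomorphism everywhere, which is the real content. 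The difficulty is the behaviour along $W^2_g$, where one must compare the local rings of $\Sym^g C$ with the explicit determinantal local equations of $W^1_g$; the normality and Cohen--Macaulayness established above, together with $\codim_{W^1_g}W^2_g\ge 2$, are exactly what make this comparison go through, and it is precisely here that I would appeal to the structural results on the Abel--Jacobi map in the theory of special divisors (\emph{cf.}\ \cite{Collino}). The same local analysis simultaneously gives the projective-space fibre structure over $W^1_{g-1}$ needed in (5).
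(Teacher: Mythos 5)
Your reconstructions of parts (2)--(5) are sound, and in fact go beyond the paper, whose entire proof of this lemma is a citation of \cite[p.190, Proposition 4.4, Corollary 4.5]{ACGH} and \cite[Theorem 2.3 and Lemma 1.2]{Mu}. Your use of the Riemann singularity theorem for (3), of Brill--Noether existence plus Martens' theorem for (2) (note the hyperelliptic dichotomy is already part of Martens' theorem; Mumford's refinement concerns the next case down, a harmless misattribution), of the Kempf determinantal structure for Cohen--Macaulayness and normality, and of Zariski's Main Theorem together with the fibre description $q^{-1}(l)=\PR\big(H^0(C,l)\big)$ for (4)--(5), is exactly how those references proceed.

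The genuine gap is in part (1), which you yourself flag as ``the real content'' and then do not prove. Two concrete problems. First, step (i) fails as stated: to obtain the universal-property morphism $\phi:\Sym^g C\to \Bl_{W^1_g}\Pic^g(C)$ you need the inverse image ideal $q^{-1}\mathcal{I}_{W^1_g}\cdot\mathcal{O}_{\Sym^g C}$ to be \emph{invertible}, and this does not follow from the set-theoretic preimage $q^{-1}(W^1_g)$ being a divisor in a smooth variety: an ideal on a smooth variety can have divisorial support without being invertible (e.g. $(x^2,xy)$ on $\AF^2$). Establishing invertibility requires comparing the determinantal (Fitting-ideal) equations of $W^1_g$ with local coordinates on $\Sym^g C$, which is essentially the content of the theorem you are trying to prove. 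Second, at step (iii) you defer the isomorphism across $W^2_g$ to ``structural results on the Abel--Jacobi map \ldots (cf.\ \cite{Collino})'', but Collino's paper concerns the rational equivalence ring of symmetric products and contains no such structural result; the statement you need is precisely the theorem of Munoz-Porras \cite[Theorem 2.3]{Mu}, which is what the paper cites. Normality, Cohen--Macaulayness and $\codim_{W^1_g}W^2_g\geq 2$ by themselves do not force a proper birational morphism with projective-space fibres to be the blow-up of the given centre (a priori the scheme structure on the centre, reduced versus determinantal, matters), so as written assertion (1) rests on a reference that does not contain it.
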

  \begin{proof}
  See \cite[p.190, Proposition 4.4, Corollary 4.5]{ACGH} and \cite[Theorem 2.3 and Lemma 1.2]{Mu}. 
  \end{proof}

  We identify $\Pic^r(C)$ with $\Pic^0(C)$ via the map $l\mapsto l \otimes \mathcal{O}_{C}(-r.p)$. 
  Apply this to $r=g-1,g$, and we obtain the commutative diagram on the rational Chow groups:
  
  \begin{equation}\label{A}
  \diagram
  \CH_k(\Sym^{g-1} C;\QQ) \ar[dd]_-{q_{\Theta *}} \ar[rr]^-{j_{C *}} & & \CH_k(\Sym^g;\QQ) \ar[dd]^-{q_*} \\ \\
  \CH_k(\Theta;\QQ) \ar[rr]^-{j_*} & & \CH_k(J(C);\QQ)
  \enddiagram
  \end{equation}
  
\subsection{ $k=0$}
 
We start by looking at the case when $k=0$. 
\begin{proposition}
\label{prop2}
Let $C$ be a smooth projective curve of genus $g$. Let $\Th$ be a Theta divisor embedded inside $J(C)$ and let $j$ denote the embedding. 
Then the push-forward homomorphism 
$$
j_*:{\CH_0(\Th;\QQ)}\rightarrow {\CH_0(J(C);\QQ)}
$$
 is injective.
\end{proposition}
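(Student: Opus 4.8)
The plan is to exploit the commutative square (\ref{A}) with $k=0$, thereby transferring the question from $j_*$ to the ``add a fixed point'' map $j_{C*}$ on symmetric products, where Collino's comparison theorem applies. First I would analyze the two vertical arrows. Since $q:\Sym^g C\to J(C)$ is a birational proper morphism of smooth projective varieties (Lemma \ref{BN}), the pushforward
\[
q_*:\CH_0(\Sym^g C;\QQ)\xrightarrow{\ \sim\ }\CH_0(J(C);\QQ)
\]
is an isomorphism, by the birational invariance of $\CH_0$ for smooth projective varieties. On the left, $q_\Theta:\Sym^{g-1}C\to\Theta$ is proper and surjective, so $q_{\Theta*}:\CH_0(\Sym^{g-1}C;\QQ)\to\CH_0(\Theta;\QQ)$ is surjective.

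Granting these, injectivity of $j_*$ follows from injectivity of $j_{C*}$ by a diagram chase. Suppose $\alpha\in\CH_0(\Theta;\QQ)$ with $j_*\alpha=0$; note that necessarily $\deg\alpha=0$, as $j_*$ preserves degrees and $\Theta$, $J(C)$ are connected. Choose $\beta$ with $q_{\Theta*}\beta=\alpha$. Commutativity of (\ref{A}) gives $q_*\,j_{C*}\beta=j_*\,q_{\Theta*}\beta=0$, and since $q_*$ is an isomorphism we get $j_{C*}\beta=0$. If $j_{C*}$ is injective, then $\beta=0$, hence $\alpha=q_{\Theta*}\beta=0$.

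The heart of the argument, and the step I expect to be the main obstacle, is thus the injectivity of
\[
j_{C*}:\CH_0(\Sym^{g-1}C;\QQ)\longrightarrow\CH_0(\Sym^g C;\QQ).
\]
Here I would invoke the comparison theorem of Collino \cite{Collino}, which supplies a direct-sum decomposition of $\CH_0(\Sym^n C;\QQ)$, natural in $n$, under which the add-point map becomes the inclusion of a sub-collection of summands, in particular a split injection. The underlying reason is motivic: over $\QQ$ one has $\CH_0(\Sym^n C;\QQ)=\CH_0(C^n;\QQ)^{S_n}$, and the choice of base point $P$ splits the motive of the curve as $h(C)=\uno\oplus h^1(C)\oplus\Le$, yielding a K\"unneth-type decomposition of $\CH_0(C^n;\QQ)$; the embedding $j_C$ inserts $P$ in the last factor, i.e.\ corresponds to $\id\otimes i_{P*}$ where $i_{P*}:\QQ\to\CH_0(C;\QQ)$ hits the generator $[P]$ of the Lefschetz summand $\Le$. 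This identifies $j_{C*}$ with the inclusion of the direct summand of $\CH_0(\Sym^g C;\QQ)$ whose last factor is of weight two, which is manifestly injective. I would emphasize that a naive retraction --- pulling back along the finite flat degree-$g$ map $\Sym^{g-1}C\times C\to\Sym^g C$ and projecting off the $C$-factor --- only returns the identity up to correction terms supported on translated divisors, so it is precisely Collino's structural decomposition, rather than a one-line correspondence, that makes the injectivity work.
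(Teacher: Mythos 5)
Your proposal is correct and takes essentially the same route as the paper: the commutative square (\ref{A}) with $k=0$, Collino's theorem \cite[Theorem 1]{Collino} for the injectivity of $j_{C*}$, and birational invariance of $\CH_0$ of smooth projective varieties for $q_*$, followed by the same diagram chase. The only (harmless) divergence is at $q_{\Theta*}$: you obtain its surjectivity directly from properness and surjectivity of $q_\Theta$, whereas the paper gets it by comparing the localization sequences for $B\subset \Theta$ and $q_\Theta^{-1}(B)\subset \Sym^{g-1}(C)$ via the identification $\CH_0(q_\Theta^{-1}(B);\QQ)=\CH_0(B;\QQ)$ --- your shortcut suffices, since the chase only needs surjectivity on that edge.
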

\begin{proof}

Refer to the commutative diagram (\ref{A}). Consider the pushforward map:
$$
j_*: \CH_0(\Th;\QQ) \rightarrow \CH_0(J(C);\QQ).
$$
By Collino's theorem \cite[Theorem 1]{Collino}, the map $(j_C)_*$ is injective. Since  the morphisms $q$ and $q_C$ are birational morphisms (see \cite{Mu}), and $\CH_0$ is a birational invariant for smooth varieties, we have the equality:
$$
 \CH_0(J(C);\QQ)\,=\, \CH_0(\Sym^g(C);\QQ).
$$
We refer to Lemma \ref{BN}, and consider $B:=Sing(\Theta)$, the singular locus of $\Theta$ and $U:=\Theta- B$. Now $q_\Theta$ is an isomorphism outside $B$.
Consider the localization maps:
$$
\CH_0(q_{\Theta}^{-1}B)\rightarrow \CH_0(\Sym^{g-1}(C);\QQ)\rightarrow \CH_0(U;\QQ)\rightarrow 0
$$
and
$$
\CH_0(B)\rightarrow \CH_0(\Theta;\QQ) \rightarrow \CH_0(U;\QQ)\rightarrow 0.
$$

There is a stratification of $B$, on which the restriction of $q_{\Theta}$ is given by projective bundles.

By the projective bundle formula, we conlcude that 
$$
\CH_0(q_{\Theta}^{-1}(B);\QQ)= \CH_0(B;\QQ).
$$

Hence we conclude the injectivity of $j_*$.\end{proof}

\subsection{Case $g=3$, $k=1$}

Suppose $\rm{genus}(C)=3$ and $C$ is non-hyperelliptic. Here $\Sym^3(C)$ is the blow-up of $J(C)$ along the curve $C$, i.e.
$$
C= \{ L\in \Pic^3(C): h^0(L)\geq 2\}.
$$
Furthermore, $\Theta= \Sym^2(C)$.
See Lemma \ref{BN}.
Here $C\hookrightarrow J(C)\simeq \Pic^3(C)$, via $\mathcal{O}(x)\mapsto K_C\otimes \mathcal{O}_C(-x)$, where $K_C$ is the canonical line bundle of $C$.

Hence we can write 
$$
q:\Sym^3(C)= Bl_{C}(J(C)) \rightarrow J(C).
$$
Let $E_C$ denote the exceptional surface inside $\Sym^3(C)$.

By the blow-up formula:
$$
\CH_1(\Sym^3(C);\QQ) \,=\, \CH_1(J(C);\QQ) \oplus \CH_1(E_C;\QQ).
$$
 
 \begin{proposition}\label{g3} Assume $g=3$ and $C$ is non-hyperelliptic.
The pushforward map
$$
j_*: F^1\CH_1(\Theta;\QQ)\rightarrow F^1\CH_1(J(C);\QQ)
$$
is injective.
\end{proposition}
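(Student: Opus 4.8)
The plan is to read off the statement for $j_*$ from the commutative square (\ref{A}) with $k=1$, feeding in Collino's injectivity and the blow-up structure of $q$. First, since $C$ is non-hyperelliptic of genus $3$ we have $W^1_{g-1}=W^1_2=\emptyset$, so Lemma \ref{BN}(5) makes $q_\Theta:\Sym^2 C\to\Theta$ an isomorphism; hence $q_{\Theta*}:\CH_1(\Sym^2 C;\QQ)\to\CH_1(\Theta;\QQ)$ is an isomorphism preserving $F^1$. The square gives $j_*\circ q_{\Theta*}=q_*\circ j_{C*}$, so injectivity of $j_*$ on $F^1\CH_1(\Theta;\QQ)$ is equivalent to injectivity of $q_*\circ j_{C*}$ on $F^1\CH_1(\Sym^2 C;\QQ)$.

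Next I would bring in two inputs. Collino's theorem \cite{Collino} gives that $j_{C*}$ is injective, and it plainly preserves $F^1$ since proper pushforward preserves homological triviality. The blow-up formula $\CH_1(\Sym^3 C;\QQ)=q^*\CH_1(J(C);\QQ)\oplus K$ identifies $K:=\ker q_*$ with the $\QQ$-span of the exceptional ruling classes; via $[x]\mapsto[F_x]$ one has $K\cong\CH_0(C;\QQ)$, while $q_*$ is an isomorphism on the summand $q^*\CH_1(J(C);\QQ)$. Because $j_{C*}$ is injective, the kernel of $q_*\circ j_{C*}$ is $j_{C*}^{-1}(K)$, and the whole assertion reduces to showing $\im j_{C*}\cap K=0$, at least on $F^1$.

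The main obstacle is this last step. The image $\Sigma:=j_C(\Sym^2 C)=\{D\ge P\}$ maps isomorphically onto $\Theta$ under $q$ and, by a Riemann--Roch count using non-hyperellipticity (so that $h^0(K-x-P)=1$ for every $x$), meets each ruling $F_x=|K-x|$ in a single point; thus $\Sigma\cap E_C$ is a section of $E_C\to C$ and contains no fibre. Extracting the $K$-component by Gysin restriction to $E_C$ followed by the ruling projection $p_*$ (up to sign this is the identity on $K$) and using the projection formula, one rewrites the exceptional component of $j_{C*}\tilde\alpha$ as an intersection of $\tilde\alpha$ with that section curve inside $\Sigma$. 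The difficulty is that a bare degree count cannot suffice: $F^1\cap K\cong\CH_0^{\deg 0}(C;\QQ)\cong J(C)\otimes\QQ$ is large, and in fact the unadorned statement $\im j_{C*}\cap K=0$ is equivalent to the theorem itself. To break the circularity one needs genuinely finer input---either Collino's full decomposition of $\CH_*(\Sym^\bullet C)$, under which $q_*$ and $j_{C*}$ split compatibly, or a Hodge-theoretic argument.

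The cleanest way I would actually close the argument exploits that $\Theta$ is here a smooth projective surface. Then $F^1\CH_1(\Theta;\QQ)=\CH^1_{\hom}(\Theta;\QQ)=\Pic^0(\Theta)\otimes\QQ$, and the Abel--Jacobi map is an isomorphism onto $J\big(H^1(\Theta,\QQ(1))\big)$. Since Abel--Jacobi is compatible with $j_*$ (the Gysin map on intermediate Jacobians), injectivity of $j_*$ on $F^1\CH_1(\Theta;\QQ)$ follows from injectivity of the Gysin map $j_*:H^1(\Theta,\QQ)\to H^3(J(C),\QQ)$. This in turn holds by Lefschetz: $j^*:H^1(J(C),\QQ)\xrightarrow{\sim}H^1(\Theta,\QQ)$ is an isomorphism by weak Lefschetz, while $j_*\circ j^*=\cup[\Theta]:H^1(J(C),\QQ)\to H^3(J(C),\QQ)$ is injective by hard Lefschetz. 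I expect the equivalence flagged above to be the real pivot: geometrically, the transversality that $\Sigma$ is a section rather than a union of fibres is exactly what encodes this Lefschetz injectivity inside the blow-up picture.
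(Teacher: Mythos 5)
Your closing argument is exactly the paper's proof: identify $F^1\CH_1(\Theta;\QQ)$ with $\Pic^0(\Theta)\otimes\QQ$ via the Abel--Jacobi isomorphism, use compatibility of $j_*$ with the Gysin map on intermediate Jacobians, and deduce injectivity of that Gysin map from the weak Lefschetz isomorphism $j^*$ on $H^1$ together with hard Lefschetz applied to $j_*\circ j^* = \cup[\Theta]$. The preliminary Collino/blow-up analysis is a dead end whose circularity you correctly diagnose and abandon, so it does not affect the validity of the proof.
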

\begin{proof}
Let $\Theta $ be denoted by $H:=[\Theta] \in \CH^1(J(C))$. Consider the maps given by intersection with the $\Theta$, in $J(C)$:
$$
\CH^1(J(C);\QQ) \stackrel{\cap H}\rightarrow \CH^1(\Theta;\QQ) \rightarrow \CH^2(J(C);\QQ).
$$
This map restricts on the $F^1$-piece, and is compatible with the Abel-Jacobi maps. Hence we get a commutative diagram:
$$
\diagram
F^1\CH^1(J(C);\QQ) \ar[dd]_-{AJ^1_J} \ar[r]^-{\cap H}& F^1\CH^1(\Theta;\QQ) \ar[dd]_-{AJ^1_\Theta} \ar[r]^-{j_*}& F^1\CH^2(J(C);\QQ)\ar[dd]_-{AJ^2_J} \\ \\
IJ(H^1(J(C);\QQ) \ar[r]^-{\cap H}  & IJ(H^1(\Theta;\QQ)) \ar[r]^{h}& IJ(H^3(J(C);\QQ) 
\enddiagram
$$
Now $AJ^1_J$ and $AJ^1_\Theta$ are isomorphisms. Since $H$ is an ample divisor,  $h\circ (\cap H)$ is injective by the hard Lefschetz theorem, and $\cap H$ on $IJ(H^1(J(C);\QQ)$ is an isomorphism, by Lefschetz hyperplane theorem. This implies that $h$ is injective. Hence $j_*$ is injective.
\end{proof} 

\section{Abel-Jacobi maps on $F^1\CH_r(\Theta;\QQ)$}

When $g\geq 4$, the theta divisor is singular and the singular locus $B$ has dimension at least $g-4$. When $C$ is non-hyperelliptic the dimension is equal to $g-4$.  See Lemma \ref{BN}. We will consider a non-hyperelliptic curve $C$, which is the generic situation.

Consider the localization sequence:
$$
\CH_k(B;\QQ) \rightarrow \CH_k(\Theta;\QQ) \rightarrow \CH_k(\Theta-B) \rightarrow 0.
$$
We would like to know how $F^1$ behaves with localization and associate Abel-Jacobi maps to the $F^1$-terms.


\subsection{General Abel-Jacobi maps}\label{generalAJ}

Suppose $X$ is a smooth quasi-projective variety defined over the complex numbers. 
Let $X\subset \overline{X}$ be a smooth compactification of $X$.
Let MHS denote the category of $\QQ$-mixed Hodge structures. 
There is an Abel-Jacobi map:
\begin{eqnarray*}
\CH_{\hom}^m(X;\QQ) & \rightarrow & \rm{Ext}^1_{\rm{MHS}}(\QQ(0),H^{2m-1}(X,\QQ(m))) \\
              & = & \rm{Ext}^1_{\rm{MHS}}(\QQ(0),W_0H^{2m-1}(X,\QQ(m)))
\end{eqnarray*}

We are interested in the Abel-Jacobi map restricted to the image:
\[
\CH_{\hom}^m(X;\QQ)^{\circ} := {\rm Im}\big(\CH_{\hom}^m(\overline{X};\QQ)\rightarrow \CH_{\hom}^m(X;\QQ)\big).
\]
The conjectured equality
\[
\CH^m_{\hom}(X;\QQ)^{\circ} = \CH^m_{\hom}(X;\QQ),
\]
is a consequence of the Hodge conjecture. In fact, we have:

\begin{proposition} \label{HC} 
Let $Y\subset \ol{X}$ be a subvariety, where $\ol{X}$ is
smooth projective of dimension $n$, and let $X = \ol{X} \backslash Y$. For $m \leq 2$ and $m\geq n-1$ (and more generally for
all $m$ if one assumes the Hodge conjecture), there is an exact sequence:
\[
\CH^m_{{Y}}(\ol{X};\QQ)^{\circ} \xrightarrow{} \CH_{\hom}^m(\ol{X};\QQ)\to  \CH^m_{\hom}(X;\QQ)\to 0,
\]
where 
\[
\CH^m_{{Y}}(\ol{X};\QQ) = \CH_{n-m}(Y;\QQ), 
\]
and
\[
\CH^m_{{Y}}(\ol{X};\QQ)^{\circ} := \{\xi\in \CH^m_{{Y}}(\ol{X};\QQ)\ |\
j(\xi)\in  \CH_{\hom}^m(\ol{X};\QQ)\}.
\]
Here $j$ is the map
\[
\CH^m_{{Y}}(\ol{X};\QQ) \xrightarrow{j} \CH^m(\ol{X};\QQ)\to  \CH^m(X;\QQ)\to 0.
\]

\end{proposition}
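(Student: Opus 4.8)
The plan is to upgrade the ordinary localization exact sequence
\[
\CH^m_{Y}(X;\QQ) \xrightarrow{\ j\ } \CH^m(X;\QQ) \xrightarrow{\ \res\ } \CH^m(X-Y;\QQ) \to 0
\]
(valid with $\QQ$-coefficients) by tracking homological triviality at each spot; write $U := X-Y$ and let $[\,\cdot\,]$ denote the cycle class map into the groups $H^{2\bullet}(-,\QQ(\bullet))$. Exactness at the two left-hand spots is formal and conjecture-free. That the composite $\CH^m_{Y}(X;\QQ)^{\circ}\to \CH^m_{\hom}(X;\QQ)\to \CH^m_{\hom}(U;\QQ)$ vanishes is immediate from $\res\circ j=0$. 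For exactness at $\CH^m_{\hom}(X;\QQ)$, suppose $\xi\in \CH^m_{\hom}(X;\QQ)$ restricts to $0$ in $\CH^m_{\hom}(U;\QQ)\subset \CH^m(U;\QQ)$; then the ordinary sequence gives $\xi=j(\eta)$ for some $\eta\in \CH^m_{Y}(X;\QQ)$, and since $j(\eta)=\xi$ is homologically trivial, $\eta$ lies in $\CH^m_{Y}(X;\QQ)^{\circ}$ by definition.

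The content of the statement is therefore surjectivity of $\CH^m_{\hom}(X;\QQ)\to \CH^m_{\hom}(U;\QQ)$. Given $\zeta\in \CH^m_{\hom}(U;\QQ)$, I first lift it, by the ordinary sequence, to some $\xi_0\in \CH^m(X;\QQ)$ with $\res(\xi_0)=\zeta$. Then $[\xi_0]$ is an algebraic, hence Hodge, class in $H^{2m}(X,\QQ(m))$ whose restriction to $U$ equals $[\zeta]=0$, so $[\xi_0]$ lies in $\ker\big(H^{2m}(X,\QQ(m))\to H^{2m}(U,\QQ(m))\big)=\im\big(H^{2m}_{Y}(X,\QQ(m))\to H^{2m}(X,\QQ(m))\big)$. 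If I can produce $\eta\in \CH^m_{Y}(X;\QQ)$ with $[j(\eta)]=[\xi_0]$, then $\xi:=\xi_0-j(\eta)$ satisfies $[\xi]=0$ and $\res(\xi)=\res(\xi_0)=\zeta$, giving the desired preimage in $\CH^m_{\hom}(X;\QQ)$.

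Producing this $\eta$ is where the Hodge Conjecture enters, and it is the main obstacle. Choosing a resolution $\sigma:\tilde Y\to X$ of $Y$ (a disjoint union of resolutions $\tilde Y_i$ of the irreducible components $Y_i$, of codimension $c_i\geq 1$), the image of $H^{2m}_{Y}(X,\QQ(m))$ in the pure Hodge structure $H^{2m}(X,\QQ(m))$ is covered by the Gysin maps $\sigma_{i\ast}:H^{2(m-c_i)}(\tilde Y_i,\QQ(m-c_i))\to H^{2m}(X,\QQ(m))$ together with contributions from the higher-codimension singular strata of $Y$, which one absorbs by induction on the stratification. Since these Gysin maps are morphisms of pure Hodge structures, strictness lets me write $[\xi_0]=\sum_i \sigma_{i\ast}\beta_i$ with each $\beta_i$ a \emph{Hodge} class on the smooth projective $\tilde Y_i$. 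The Hodge Conjecture for $\tilde Y_i$ then makes $\beta_i$ algebraic, and $\sum_i\sigma_{i\ast}\beta_i$ is realized by an algebraic cycle supported on $Y$, namely the required $\eta$. Verifying that the image is exactly covered by Gysin pushforwards from a resolution, and carrying out the strictness step, is the technical heart of the argument.

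Finally I would confirm that in the two stated ranges only unconditional input is used. The classes $\beta_i$ live in codimension $m-c_i$ on $\tilde Y_i$, which has dimension $d_i:=n-c_i$. For $m\leq 2$ one has $m-c_i\leq 1$, so only the Lefschetz $(1,1)$-theorem is needed. For $m\geq n-1$ one has $m-c_i\geq (n-c_i)-1=d_i-1$, so $\beta_i$ has dimension $\leq 1$: for $0$-cycles algebraicity is trivial (top cohomology), while for $1$-cycles the hard Lefschetz isomorphism $L^{\,d_i-2}:H^2(\tilde Y_i)\xrightarrow{\sim}H^{2d_i-2}(\tilde Y_i)$ — a morphism of Hodge structures given by cupping with an algebraic class — reduces the algebraicity of $\beta_i$ to the Lefschetz $(1,1)$-theorem in $H^2(\tilde Y_i)$. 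Thus in both ranges the argument is unconditional, whereas in general it requires precisely the Hodge Conjecture for the resolutions $\tilde Y_i$.
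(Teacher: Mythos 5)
The paper's own ``proof'' of Proposition \ref{HC} is literally ``Left to the reader,'' so there is no argument in the paper to compare yours against; what you have written supplies the omitted argument, and it is correct --- it is precisely the standard argument this proposition encodes (localization, plus the fact that a Hodge class on $X$ vanishing on $U=X-Y$ is the class of an algebraic cycle supported on $Y$, conditionally on HC, unconditionally in the stated ranges). Your formal reductions are exactly right, and the codimension bookkeeping for the unconditional ranges ($m-c_i\le 1$ when $m\le 2$, so Lefschetz $(1,1)$ suffices; $m-c_i\ge \dim\tilde Y_i-1$ when $m\ge n-1$, so hard Lefschetz plus Lefschetz $(1,1)$ suffices) is accurate, including for deeper strata since their codimensions are larger. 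Two refinements would tighten what you call the technical heart. First, the hedge about ``contributions from higher-codimension singular strata, absorbed by induction'' is unnecessary: apply Deligne's theory of weights directly to the (possibly very singular) compact variety $Y$. For any resolution $\tilde Y=\bigsqcup_i \tilde Y_i$ of its irreducible components, the pushforward $H_{2(n-m)}(\tilde Y;\QQ)\to H_{2(n-m)}(Y;\QQ)$ surjects onto the lowest-weight piece, and since $H^{2m}(X,\QQ(m))$ is pure, strictness of the morphism $H_{2(n-m)}(Y;\QQ)\to H_{2(n-m)}(X;\QQ)\cong H^{2m}(X,\QQ(m))$ shows that its image coincides with the sum of the images of the Gysin maps $\sigma_{i*}:H^{2(m-c_i)}(\tilde Y_i,\QQ(m-c_i))\to H^{2m}(X,\QQ(m))$; no stratification induction is needed. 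Second, ``strictness lets me write $[\xi_0]=\sum_i\sigma_{i*}\beta_i$ with each $\beta_i$ Hodge'' is slightly too quick: strictness controls the Hodge filtration over $\C$ but says nothing about rationality of the lift. What you actually need is that the $\sigma_{i*}$ are morphisms of \emph{polarizable} pure $\QQ$-Hodge structures, so that the source splits as $\ker(\sigma_*)\oplus\ker(\sigma_*)^{\perp}$ and the rational $(m,m)$-class $[\xi_0]$ lifts to a rational Hodge class; semisimplicity of polarizable Hodge structures, not bare strictness, is the correct citation. With those two standard facts referenced, your proof is complete.
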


\begin{proof}  Let $\xi\in  \CH^m_{\hom}(X;\QQ)$, and choose $\ol{\xi}\in  \CH^m(\ol{X};\QQ)$
which maps to $\xi$. By construction, the fundamental class $[\ol{\xi}]\in H^{2m}(\ol{X},\QQ(m))$
lies in the image $H_Y^{2m}(\ol{X},\QQ(m)) \to H^{2m}(\ol{X},\QQ(m))$. Let $q=$ codim$_{\ol{X}}Y$,
and let $\sigma : \tilde{Y}\to Y$ be a desingularization. By a weight argument and mixed Hodge theory, 
$[\ol{\xi}]$ lies in the image $H^{m-q,m-q}(\tilde{Y},\QQ(m-q)) \to H^{2m}(\ol{X},\QQ(m))$, which will come from
the fundamental class of an algebraic cycle $\gamma\in \CH^{m-q}(\tilde{Y};\QQ)$, provided that the Hodge conjecture
holds for $\tilde{Y}$. Assuming this, then $\ol{\xi} - j(\sigma_*(\gamma))\in \CH_{\hom}^m(\ol{X};\QQ)$ 
maps to $\xi\in \CH_{\hom}^m(X;\QQ)$. The rest is clear.
\end{proof}

Thus we get a map $\CH^m(X;\QQ)_{\hom}^{\circ}\rightarrow$
\begin{equation}\label{EEE}
 {\rm Im}\big(\rm{Ext}^1_{\rm{MHS}}(\QQ(0),W_{-1}H^{2m-1}(X,\QQ(m)) 
\to \rm{Ext}^1_{\rm{MHS}}(\QQ(0),W_0H^{2m-1}(X,\QQ(m))\big),
\end{equation}
where we use the fact that 
\[
W_{-1}H^{2m-1}(X,\QQ(m))  =\ {\rm Image}\big(H^{2m-1}(\ol{X},\QQ(m)) \to H^{2m-1}(X,\QQ(m))\big),
\]
together with the Abel-Jacobi image of a class in $\CH^m_{\hom}(X;\QQ)$ being in the Abel-Jacobi image
of a class in $\CH_{\hom}^m(\ol{X};\QQ)$.
Note that the term in (\ref{EEE}) above can be identified with
\[
 \frac{\rm{Ext}^1_{\rm{MHS}}(\QQ(0),W_{-1}H^{2m-1}(X,\QQ(m))}{\delta \hom_{\rm{MHS}}(\QQ(0),Gr_0^W H^{2m-1}(X,\QQ(m)))},
\]
where $\delta$ is the connecting homomorphism in the long exact sequence associated to
$$
0\rightarrow W_{-1}H^{2m-1}({X},\QQ(m))\rightarrow W_0H^{2m-1}(X,\QQ(m))\rightarrow Gr_0^WH^{2m-1}(X,\QQ(m))\rightarrow 0.
$$
In case, $Gr_0^WH^{2m-1}(X,\QQ(m))=0$ then the target of the Abel-Jacobi map is the group 
$\rm{Ext}^1_{\rm{MHS}}(\QQ(0),W_{-1}H^{2m-1}(X,\QQ(m))$.


\subsection{$ F^1$-term of $\CH_k(\Sym^{g-1};\QQ)$ and  $\CH_k(\Theta;\QQ)$ }

In this subsection, we consider the situation $B\subset \Theta$. Denote the complement $U:= \Theta -B$.

Recall from Lemma \ref{BN}, 
that the morphism $q_\Theta:\Sym^{g-1}C \rightarrow \Theta $ is a birational morphism and is a  smooth resolution of $\Theta$ (\cite{Mu}).

Denote $U_{\rm sym}:= {q_\Theta}^{-1}(U)$ and $Y:= \Sym^{g-1}(C)-U_{\rm sym}$. Note $U_{\rm sym}\simeq U$.

\begin{lemma}
The restriction map
$$
F^1\CH_k(\Sym^{g-1}(C);\QQ) \rightarrow F^1\CH_k(U_{\rm sym};\QQ)
$$
is surjective, when $k=g-2,\,g-3$. 
\end{lemma}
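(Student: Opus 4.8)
The plan is to reduce the statement to a purely cohomological assertion about classes supported on $Y$, and then to settle that assertion by a dimension count in which the non-hyperelliptic hypothesis is decisive. Throughout write $S := \Sym^{g-1}(C)$, which is smooth projective of dimension $d = g-1$, and set $m := d-k$, so that the two cases $k = g-2$ and $k = g-3$ correspond to $m=1$ and $m=2$. By Lemma \ref{BN}, the closed set $Y = q_\Theta^{-1}(B)$ fibres over $B = W^1_{g-1}$ in projective spaces, with generic fibre $\PR^1$ over the locus where $h^0=2$. Since $\dim B = g-4$ for $C$ non-hyperelliptic, the dominant component of $Y$ has dimension exactly $g-3$, while the higher Brill--Noether loci $W^2_{g-1}$ contribute components of strictly smaller dimension; hence $Y$ has pure codimension $2$ in $S$.

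First I would set up the reduction. The localization sequence
\[
\CH_k(Y;\QQ) \to \CH_k(S;\QQ) \xrightarrow{\ r\ } \CH_k(U_{sym};\QQ) \to 0
\]
shows that $r$ is surjective, so the content of the lemma is that it remains surjective on $F^1$-pieces. Take $\xi \in F^1\CH_k(U_{sym};\QQ)$ and lift it to some $\tilde\xi \in \CH_k(S;\QQ)$. The only obstruction to $\tilde\xi$ lying in $F^1$ is the class $cl_S(\tilde\xi) \in H^{2m}(S,\QQ)$; by compatibility of cycle classes with restriction, $cl_S(\tilde\xi)$ maps to $cl_{U_{sym}}(\xi)$, which vanishes since $\xi$ is homologically trivial. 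As $S$ is a smooth compactification of $U_{sym}$, Deligne's weight theory gives $W_{2m}H^{2m}(U_{sym}) = \mathrm{Im}\big(H^{2m}(S)\to H^{2m}(U_{sym})\big)$, and the localization sequence in cohomology identifies the kernel of this restriction with $\mathrm{Im}\big(H^{2m}_Y(S)\to H^{2m}(S)\big)$. Thus $cl_S(\tilde\xi)$ lies in this image, and everything comes down to the claim that every class in $\mathrm{Im}\big(H^{2m}_Y(S)\to H^{2m}(S)\big)$ is algebraic, represented by a cycle supported on $Y$. Granting this, choose an algebraic $\eta$ supported on $Y$ with $cl_S(\eta) = cl_S(\tilde\xi)$; then $\tilde\xi - \eta \in F^1\CH_k(S;\QQ)$ while $r(\tilde\xi-\eta) = r(\tilde\xi) = \xi$, since $\eta|_{U_{sym}}=0$.

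To settle the claim I would use the duality $H^{2m}_Y(S;\QQ) \cong H^{\mathrm{BM}}_{2d-2m}(Y;\QQ)$ for the closed embedding $Y\hookrightarrow S$ into a smooth variety, combined with $\dim Y = g-3$. For $m=1$ one has $2d-2m = 2g-4 > 2\dim Y$, so $H^{\mathrm{BM}}_{2g-4}(Y)=0$; hence $H^2_Y(S)=0$, the restriction $H^2(S)\to H^2(U_{sym})$ is injective, and the obstruction vanishes automatically. For $m=2$ one has $2d-2m = 2g-6 = 2\dim Y$, so $H^{\mathrm{BM}}_{2\dim Y}(Y) = \bigoplus_i \QQ\,[Y_i]$ is freely generated by the fundamental classes of the top-dimensional irreducible components $Y_i$ of $Y$; consequently the image in $H^4(S)$ is the $\QQ$-span of the algebraic classes $[Y_i]$, which is exactly what the claim requires.

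The main obstacle is the pure-codimension-$2$ property of $Y$, i.e. the equality $\dim Y = g-3$: this is precisely what forces the relevant top Borel--Moore homology, and hence the obstruction classes in $H^4(S)$, to be spanned by fundamental classes of algebraic cycles. It is exactly here that non-hyperellipticity is used, since in the hyperelliptic case $\dim B = g-3$ would push $\dim Y$ up to $g-2$ and allow possibly transcendental classes in $H^{\mathrm{BM}}_{2g-6}(Y)$, breaking the argument for $m=2$. A secondary point requiring care is the weight-theoretic description of the $F^1$-term for the open variety $U_{sym}$, namely that homological triviality is detected in $W_{2m}H^{2m}(U_{sym}) = \mathrm{Im}\big(H^{2m}(S)\to H^{2m}(U_{sym})\big)$; this is what permits the obstruction to be read off inside $H^{2m}(S)$ and then cleared by cycles supported on $Y$.
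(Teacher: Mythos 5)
Your proof is correct, and its skeleton is the same as the paper's: the paper's entire proof is ``apply the localization sequence and Proposition \ref{HC} to the triple $(Y\subset \Sym^{g-1}(C)\supset U_{sym})$,'' where Proposition \ref{HC} --- whose proof the paper leaves to the reader --- asserts, unconditionally in codimension $m\le 2$, exactly the exactness of
\[
\CH^m_{Y}(S;\QQ)^{\circ}\longrightarrow \CH^m_{\hom}(S;\QQ)\longrightarrow \CH^m_{\hom}(U_{sym};\QQ)\longrightarrow 0, \qquad S:=\Sym^{g-1}(C).
\]
What you have done, in effect, is supply the proof of the needed case of that proposition: your reduction (lift $\xi$, note via the local cohomology sequence and the weight-theoretic description of $W_{2m}H^{2m}(U_{sym})$ that the obstruction $\cl_S(\tilde\xi)$ lies in ${\rm Im}\bigl(H^{2m}_Y(S)\to H^{2m}(S)\bigr)$, then clear it by an algebraic cycle supported on $Y$), combined with the Borel--Moore dimension count ($H^2_Y(S)=0$ when $m=1$; $H^4_Y(S)\cong H^{\mathrm{BM}}_{2g-6}(Y;\QQ)$ spanned by fundamental classes when $m=2$), is precisely the content hidden behind ``left to the reader.'' So your write-up is more self-contained than the paper's, at the cost of invoking the Brill--Noether dimension bounds explicitly. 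Two small points. First, the claim that $Y$ has \emph{pure} codimension $2$ is slightly overstated and also unnecessary: all the argument needs is $\dim Y\le g-3$, since then $H^{\mathrm{BM}}_{2g-6}(Y;\QQ)$ is freely generated by the classes of the $(g-3)$-dimensional irreducible components (possibly none), which is enough for algebraicity. Second, the inequality $\dim q_\Theta^{-1}(W^r_{g-1})\le g-3$ for \emph{all} $r\ge 1$ rests on Martens' theorem for non-hyperelliptic curves ($\dim W^r_{g-1}\le g-2r-2$, so the stratum over $W^r_{g-1}$ has dimension at most $g-2-r$); this deserves an explicit citation (ACGH, Ch.\ IV), since the paper's Lemma \ref{BN} only records $\dim B=g-4$ and the codimension of $W^2$ inside $W^1$, which by itself does not control the deeper strata $r\ge 3$.
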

\begin{proof}
Apply the localization sequence and Proposition \ref{HC}, to the triple 
$$
(Y\subset \Sym^{g-1}(C)\supset U_{\rm sym}).
$$
This gives a surjective map, when $k=g-2,\,g-3$:
\begin{equation}\label{kerpush}
F^1\CH_k(\Sym^{g-1}(C);\QQ)\rightarrow F^1\CH_k(U_{\rm sym} ;\QQ)\rightarrow 0.
\end{equation}
\end{proof}

Consider the pushforward $q_{\Theta *}: \CH_k(\Sym^{g-1}(C) ;\QQ ) \rightarrow \CH_k(\Theta;\QQ)$. 
  
\begin{lemma}
For $k=g-2,g-3$, the restriction map in \eqref{kerpush}  induces a map
$$
\frac{F^1\CH_k(\Sym^{g-1}(C);\QQ)}{F^1\cap \ker(q_{\Theta *})} \rightarrow F^1\CH_k(U_{\rm sym} ;\QQ)
$$
which is an isomorphism.
\end{lemma}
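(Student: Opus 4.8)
The plan is to reduce the statement to a vanishing of Chow groups of the singular locus $B$, and then to transport the surjectivity of the preceding lemma through a purely formal kernel computation. First I would record the numerical input. Since $C$ is non-hyperelliptic, Lemma \ref{BN} gives $\dim B = g-4$. For $k = g-2$ and $k = g-3$ one has $k > g-4 = \dim B$, so there are no $k$-dimensional cycles supported on $B$, whence $\CH_k(B;\QQ) = 0$. Feeding this into the localization sequence for the pair $(B\subset \Theta\supset U)$,
$$
\CH_k(B;\QQ)\to \CH_k(\Theta;\QQ)\xrightarrow{\ r_\Theta\ } \CH_k(U;\QQ)\to 0,
$$
shows that for these two values of $k$ the restriction $r_\Theta$ is an isomorphism.

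Next I would compare the two localization sequences through $q_\Theta$. Since $U_{sym} = q_\Theta^{-1}(U)$, the square
$$
\diagram
U_{sym} \ar[rr] \ar[dd] & & \Sym^{g-1}(C) \ar[dd]^-{q_\Theta} \\ \\
U \ar[rr] & & \Theta
\enddiagram
$$
is Cartesian, with $U\hookrightarrow \Theta$ an open immersion (hence flat) and $q_\Theta$ proper. Compatibility of proper push-forward with flat restriction to an open subset then gives $(q_\Theta|_{U_{sym}})_*\circ r = r_\Theta\circ q_{\Theta *}$, where $r$ denotes the restriction map of \eqref{kerpush} at the level of full Chow groups and $q_\Theta|_{U_{sym}}: U_{sym}\xrightarrow{\sim} U$ is an isomorphism. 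As $(q_\Theta|_{U_{sym}})_*$ is an isomorphism and $r_\Theta$ is injective, I obtain the equality of kernels
$$
\ker(r) = \ker(r_\Theta\circ q_{\Theta *}) = \ker(q_{\Theta *})
$$
inside $\CH_k(\Sym^{g-1}(C);\QQ)$.

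Finally I would intersect with the filtration. Because $F^1\CH_k(U_{sym};\QQ)$ is a subgroup of $\CH_k(U_{sym};\QQ)$, the kernel of the restriction of $r$ to $F^1\CH_k(\Sym^{g-1}(C);\QQ)$ equals $F^1\CH_k(\Sym^{g-1}(C);\QQ)\cap \ker(r) = F^1\cap \ker(q_{\Theta *})$. Together with the surjectivity of $r$ on $F^1$ from the preceding lemma, this shows that the induced map
$$
\frac{F^1\CH_k(\Sym^{g-1}(C);\QQ)}{F^1\cap \ker(q_{\Theta *})}\longrightarrow F^1\CH_k(U_{sym};\QQ)
$$
is simultaneously injective and surjective, hence an isomorphism. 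The one genuine input is the vanishing $\CH_k(B;\QQ)=0$, which rests on the non-hyperelliptic dimension bound $\dim B = g-4$ of Lemma \ref{BN}; everything afterward is formal functoriality of the localization sequence together with the already-established surjectivity. It is worth stressing that this route never invokes a filtration on the singular variety $\Theta$: the kernel computation takes place entirely on the smooth resolution $\Sym^{g-1}(C)$, and the passage to $F^1$ occurs only there.
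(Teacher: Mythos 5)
Your proof is correct and takes essentially the same route as the paper's: both use the non-hyperelliptic dimension bound $\dim B = g-4$ together with localization to see that $\CH_k(\Theta;\QQ)\to \CH_k(U;\QQ)$ is an isomorphism for $k=g-2,g-3$, then the commutative square comparing $q_{\Theta *}$ with the restrictions (where $q_\Theta$ is an isomorphism over $U$) to identify the kernel, and the surjectivity \eqref{kerpush} from the preceding lemma to conclude. Your explicit computation $\ker(r)=\ker(q_{\Theta *})$, and your choice to keep the $F^1$-bookkeeping entirely on the smooth resolution $\Sym^{g-1}(C)$, merely spell out steps the paper leaves implicit.
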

\begin{proof}
Since $C$ is a non-hyperelliptic curve, $dim (B)=g-4$. Hence if $k> g-4$, the restriction 
$$
h_*:\CH_k(\Theta;\QQ)\rightarrow \CH_k(U;\QQ) 
$$
is an isomorphism.
Consider the commutative diagram:

$$
\diagram
\CH_k(\Sym^{g-1} C;\QQ) \ar[rr]^-{j_{u*}} \ar[dd]^-{q_{\Th*}} & & \CH_k(U_{\rm sym};\QQ) \ar[dd]^-{q_{u*}} \\ \\
 \CH_k(\Theta;\QQ) \ar[rr]^-{h_*} & & \CH_k(U;\QQ)
\enddiagram
$$
This induces a corresponding diagram on the $F^1$-terms. Note that $q_{u*}$ is an isomorphism, since $q_{\Theta}$ is an isomorphism outside $B$.
Hence, we obtain  an isomorphism:
\begin{equation}\label{local}
\frac{F^1\CH_k(\Sym^{g-1}(C);\QQ)}{F^1\cap \ker(q_{\Theta *})} \rightarrow F^1\CH_k(U_{\rm sym} ;\QQ)= F^1\CH_k(\Theta;\QQ)
\end{equation}

\end{proof}


\subsection{Abel-Jacobi maps on $F^1$-terms}\label{conv}

When $k=g-2,\,g-3$, denote $l=1,2$ the corresponding codimension. Using \S \ref{generalAJ} and purity of Hodge structures (here $\dim B =g-4$), there is an Abel-Jacobi map:

\begin{equation}\label{AJU}
AJ_{\Theta}:\,F^1\CH_k(\Theta;\QQ)=F^1\CH_k(U;\QQ) \rightarrow IJ(H^l(U);\QQ),
\end{equation}
where we recall that:
$$
F^2\CH_k(X;\QQ)=F^2\CH_k(U;\QQ)\,=\, \rm{kernel} (AJ_\Theta).
$$

 We recall the following, which will be used in the next section.
 
 \begin{lemma}\label{JLlemma}
 The Abel-Jacobi map
 $$
 AJ: F^1CH^1(U,\QQ) \rightarrow  IJ(H^1(U,\QQ))
 $$
 is an isomorphism.
 \end{lemma}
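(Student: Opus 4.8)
The plan is to reduce the statement to the classical Abel--Jacobi isomorphism for divisors on a smooth projective variety. The point of departure is that $\CH^1(U;\QQ)=\Pic(U)\otimes\QQ$, so that $F^1\CH^1(U;\QQ)$ is the $\QQ$-vector space of homologically trivial line bundles, while by \S\ref{generalAJ} the target is $IJ(H^1(U;\QQ))=\rm{Ext}^1_{\rm{MHS}}\big(\QQ(0),H^1(U,\QQ(1))\big)$. The key simplification is that $U$ is obtained from its natural smooth projective model by deleting a subset of codimension at least two. Indeed, by Lemma \ref{BN} the resolution $q_\Theta:\Sym^{g-1}(C)\to\Theta$ restricts to an isomorphism over $U=\Theta-B$, and $Y:=q_\Theta^{-1}(B)$ has codimension $\ge 2$ in $\Sym^{g-1}(C)$: for a non-hyperelliptic curve $\dim B=g-4$ and the generic fibre of $q_\Theta$ over $B$ is a $\PR^1$, so $\dim Y=g-3$, while the deeper strata $W^k_{g-1}$ contribute strictly less by the standard dimension estimates.

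First I would exploit this codimension bound on both sides. On the Chow side, deleting a closed subset of codimension $\ge 2$ from a smooth variety leaves $\CH^1=\Pic$ unchanged, so
$$
\CH^1(U;\QQ)\;\cong\;\CH^1(U_{sym};\QQ)\;\cong\;\CH^1(\Sym^{g-1}(C);\QQ).
$$
On the Hodge side, for a smooth variety the restriction map on $H^1$ is an isomorphism of mixed Hodge structures once the complement has codimension $\ge 2$ (purity, since $1<2\cdot 2-1$); hence $H^1(U,\QQ)\cong H^1(\Sym^{g-1}(C),\QQ)$ is pure of weight $1$. Consequently $Gr_W^0 H^1(U,\QQ(1))=0$, the connecting map $\delta$ of \S\ref{generalAJ} vanishes, and
$$
IJ(H^1(U;\QQ))\;=\;\rm{Ext}^1_{\rm{MHS}}\!\big(\QQ(0),H^1(\Sym^{g-1}(C),\QQ(1))\big).
$$

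Next I would invoke the classical Abel--Jacobi isomorphism for divisors on the smooth projective variety $\ol U:=\Sym^{g-1}(C)$. Via Carlson's description of extensions of mixed Hodge structures, the exponential sequence identifies
$$
F^1\CH^1(\ol U;\QQ)=\Pic^0(\ol U)\otimes\QQ\;\xrightarrow{\ \sim\ }\;H^1(\ol U,\C)/\big(F^1+H^1(\ol U,\QQ)\big)=\rm{Ext}^1_{\rm{MHS}}\!\big(\QQ(0),H^1(\ol U,\QQ(1))\big),
$$
and this identification is exactly the Abel--Jacobi map; a dimension count (both sides have dimension $h^{0,1}$) confirms it is an isomorphism. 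Finally I would check compatibility with $AJ_\Theta$ of \eqref{AJU}: the isomorphisms on the Chow side and on the Hodge side are induced by $q_\Theta$ and by restriction to the open set, and the Abel--Jacobi map is natural for proper pushforward and for open restriction, so the relevant square commutes and $AJ_\Theta$ inherits the isomorphism.

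The only place demanding genuine care is the codimension estimate $\codim_{\Sym^{g-1}(C)}Y\ge 2$ together with the naturality of $AJ$ under the two identifications; once these are secured the claim is precisely the divisor case, where $AJ$ is always an isomorphism after $\otimes\QQ$. If one prefers not to rely on the codimension bound, the alternative is to choose an arbitrary smooth projective compactification with normal crossings boundary $\sum_i D_i$ and to verify that $AJ$ carries the homologically trivial boundary combinations $\sum_i a_i D_i$ onto $\im\,\delta$, whence $AJ$ descends to the isomorphism
$$
\Pic^0(\ol U)_\QQ\big/\langle\text{hom.\ triv.\ boundary}\rangle\;\xrightarrow{\ \sim\ }\;\rm{Ext}^1_{\rm{MHS}}\big(\QQ(0),H^1(\ol U,\QQ(1))\big)\big/\im\,\delta;
$$
this route is more general but carries the extra bookkeeping with boundary divisors.
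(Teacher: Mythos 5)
Your proof is correct, but it follows a genuinely different route from the paper's. The paper disposes of this lemma by citation alone: it invokes a general result of Lewis \cite[Proposition 2.5]{JL}, valid for an arbitrary smooth quasi-projective variety, which identifies $F^1\CH^1(U;\QQ)$ with the generalized intermediate Jacobian (in essence via the identification of $\Pic(U)$ with Deligne--Beilinson cohomology). You instead exploit the particular geometry of $U=\Theta-B$: for non-hyperelliptic $C$ the map $q_\Theta\colon \Sym^{g-1}(C)\to\Theta$ is a \emph{small} resolution, so $U\simeq U_{sym}$ sits inside the smooth projective variety $\Sym^{g-1}(C)$ with complement $Y$ of codimension $2$ (generic $\PR^1$-fibres over the $(g-4)$-dimensional locus $B=W^1_{g-1}$, deeper strata smaller by Martens-type bounds), whence $\CH^1$, $H^1$ and the weight filtration are unchanged by restriction, and the lemma collapses to the classical exponential-sequence/Carlson isomorphism for divisors on a smooth projective variety. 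Your route is elementary, self-contained, and makes the purity of $H^1(U)$ transparent; the paper's citation buys generality, which matters because the same lemma is re-invoked later for $W-\Sing(W)$, $W=W^1_g$, where your argument does not apply as written (there is no evident smooth projective model of $W-\Sing(W)$ with complement of codimension at least $2$), while Lewis's proposition applies verbatim.

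Two points to tighten, neither a gap in substance. First, the parenthetical ``dimension count'' is not the right justification: after tensoring with $\QQ$ both sides are infinite-dimensional $\QQ$-vector spaces. The correct statement (cohomology of $\Sym^{g-1}(C)$ throughout) is that the exponential sequence gives an isomorphism of complex tori $\Pic^0\xrightarrow{\sim}H^1(\C)/(F^1+H^1(\ZZ))$, and since this torus is divisible and the kernel of $H^1(\C)/(F^1+H^1(\ZZ))\to H^1(\C)/(F^1+H^1(\QQ))$ is torsion, tensoring with $\QQ$ lands you exactly on Carlson's description of ${\rm Ext}^1_{\rm MHS}(\QQ(0),H^1(\QQ(1)))$. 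Second, to see that your isomorphism $\CH^1(\Sym^{g-1}(C);\QQ)\cong\CH^1(U;\QQ)$ carries $F^1$ onto $F^1$, you also need injectivity of $H^2(\Sym^{g-1}(C),\QQ)\to H^2(U_{sym},\QQ)$, so that a divisor class with homologically trivial restriction is itself homologically trivial; this follows from the same codimension-$2$ purity you already invoke, since $2\leq 2\cdot 2-2$.
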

 \begin{proof}
 See \cite[Proposition 2.5]{JL}.
 \end{proof}


\section{Main Theorem}\label{Mainthmmainsection}

Now we can state our main theorem.

\begin{theorem}\label{Mainthmproof}
Assume $C$ is a non-hyperelliptic smooth projective curve of genus $g\geq 3$, over $\mathbb{C}$.
The pushforward morphisms
$$
j_*: F^1\CH_{g-2}(\Theta;\QQ) \rightarrow F^1\CH_{g-2}(J(C);\QQ)
$$
and
$$
j_*:F^2\CH_{g-3}(\Theta;\QQ) \rightarrow F^2\CH_{g-3}(J(C);\QQ)
$$
are injective.
\end{theorem}

Note that the first injectivity statement generalizes Proposition \ref{g3}.

\begin{proof}
Consider the birational morphisms
$$
q_\Theta: \Sym^{g-1}(C) \rightarrow \Theta
$$
and 
$$
q:\Sym^g(C)\rightarrow J(C).
$$
These maps induce the commutative diagram on the $F^1$-terms of the rational Chow groups:

  $$
  \diagram
  F^1\CH_k(\Sym^{g-1} (C);\QQ) \ar[dd]_-{q_{\Theta*}} \ar[rr]^-{j_{C*}} & & F^1 \CH_k(\Sym^g(C);\QQ) \ar[dd]^-{q_*} \\ \\
  F^1\CH_k(\Theta;\QQ) \ar[rr]^-{j_*} & &F^1 \CH_k(J(C);\QQ)
  \enddiagram
  $$

Denote $l=g-1-k$.
The above diagram is compatible via Abel-Jacobi maps to the corresponding commutative digram of intermediate Jacobians:

$$
  \diagram
  IJ(H^l(\Sym^{g-1} (C);\QQ)) \ar[dd]_-{q_{\Theta*}} \ar[rr]^-{j_{C*}} & & IJ(H^{l+2}((\Sym^g(C);\QQ)) \ar[dd]^-{q_*} \\ \\
  IJ(H^l(\Theta;\QQ)) \ar[rr]^-{h} & & IJ(H^{l+2}((J(C);\QQ))
  \enddiagram
  $$
  In the above diagram, $l=1,2$.
Since $\codim(B)\geq 2$ in $\Theta$, without any confusion, we write  $IJ(H^l(\Theta;\QQ))= IJ(H^l(U;\QQ))$.

\textbf{Case 1) $l=1$}.  

Denote $H$  the ample divisor $\Theta$ on $J(C)$, 
  Consider the diagram obtained by intersecting with $H$ in $\CH^*(J(C))$ (resp. in $H^*(J(C),\QQ))$.
$$
\diagram
F^1\CH^1(J(C);\QQ) \ar[dd]_-{AJ^1_J} \ar[r]^-{\cap H}& F^1\CH^1(\Theta;\QQ) \ar[dd]_-{AJ_\Theta} \ar[r]^-{j_*}& F^1\CH^2(J(C);\QQ)\ar[dd]_-{AJ^2_J} \\ \\
IJ(H^1(J(C);\QQ) \ar[r]^-{\cap H}  & IJ(H^1(\Theta;\QQ)) \ar[r]^{h}& IJ(H^3(J(C);\QQ) 
\enddiagram
$$
Now $AJ^1_J$ and $AJ_\Theta$ are isomorphisms. In particular $AJ_\Theta$ is defined in terms of $U := \Theta - B$ (see Lemma \ref{JLlemma}). 
Since $H$ is an ample class,  $h\circ (\cap H)$ is injective by the hard Lefschetz theorem, and $\cap H$ on $IJ(H^1(J(C);\QQ)$ is an
 isomorphism, by the Lefschetz hyperplane theorem. This implies that $h$ is injective. Hence $j_*$ is injective.

\textbf{Case 2):  $l=2$}.

  Now $q:\Sym^g(C)\rightarrow J(C)$ is a blow-up morphism along a codimension two subvariety 
  $$
  W= \,W^1_g=\, \{L\in {\Pic}^g(C): h^0(L)\geq 2\}.
  $$
 (See Lemma \ref{BN}).  Denote $E_W\subset \Sym^g(C)$ the exceptional locus of the blow-up morphism $q$.
   In particular, we can write a decomposition:
   \begin{equation}\label{CHB}
   \CH_k(\Sym^g(C);\QQ)= \CH_k(J(C);\QQ)\oplus \CH_k(E_W;\QQ)
  \end{equation}
  
  Denote $H$ the ample divisor $\Sym^{g-1}(C)$ on $\Sym^{g}(C)$, in $\CH^*(\Sym^g(C);\QQ)$ (resp. in $H^*(\Sym^g(C),\QQ))$.
  
Consider the Abel-Jacobi maps on the $F^1$-terms of the Chow groups of the symmetric products, which are compatible with the intersection $\cap H$:
$$
\diagram
F^1\CH^2(\Sym^{g}(C);\QQ) \ar[dd]_-{AJ^1_{sym}} \ar[r]^-{\cap H}& 
F^1\CH^2(\Sym^{g-1}(C);\QQ) \ar[dd]_-{AJ_{sym} }\ar[r]^-{j_*}& F^1CH^{3}(\Sym^g(C);\QQ)\ar[dd]_-{AJ^2_{sym}} \\ \\
IJ(H^3(\Sym^g(C);\QQ) \ar[r]^-{\cap H}  & IJ(H^2(\Sym^{g-1}(C);\QQ)) \ar[r]^{h}& IJ(H^{5}(\Sym^g(C);\QQ) .
\enddiagram
$$

By \cite[Theorem 1]{Collino}, the Chow restriction map $\cap H$ is surjective and $j_*$ is injective.
This implies that  using the decomposition in \eqref{CHB}, we can write the above commutative diagram as
{\Small
$$
\diagram
F^1\CH^2(J(C);\QQ) \oplus F^1\CH^1(E_W;\QQ)\ar[dd]_-{AJ^1_{sym}} \ar[r]^-{\cap H}& H.F^1\CH^2(\Sym^g(C);\QQ) \oplus H.F^1\CH^1(E_W;\QQ)\ar[dd]_-{AJ_{sym} }\ar[r]^-{j_*}& F^1\CH^{3}(\Sym^g(C);\QQ)\ar[dd]_-{AJ^2_{sym}} \\ \\
IJ(H^3(\Sym^g(C);\QQ) \ar[r]^-{\cap H}  & IJ(H^3(\Sym^{g-1}(C);\QQ)) \ar[r]^{h}& IJ(H^{5}(\Sym^g(C);\QQ) 
\enddiagram
$$
}
A similar decomposition exists for the intermediate Jacobians. This implies that we have the equality
$$
\rm{Kernel}(AJ_{\rm sym})= \rm{Kernel} (AJ_{\rm sym}^J) \oplus \rm{Kernel} (AJ_{sym}^W).
$$
Here $AJ_{\rm sym}^J$ is the restriction of $AJ_{\rm sym} $ on the first summand and $AJ_{\rm sym}^W$ is the restriction of $AJ_{\rm sym}$ on the second summand.
 
 However, 
 $$
  AJ_{\rm sym}^W: H.F^1\CH^1(E_W;\QQ) \rightarrow H.IJ(H^1(E_W))
  $$
  has no kernel.
  
Hence,
$$
\rm{Kernel}(AJ_{\rm sym})= \rm{Kernel} (AJ_{\rm sym}^J)
$$

In other words, if we consider the composed map
$$
F^1\CH^2(\Sym^{g-1}(C);\QQ) \rightarrow F^1\CH^2(\Sym^g(C);\QQ) \rightarrow F^1\CH^3(J(C);\QQ)
$$
(the second map is the projection to its first summand), then it induces an injective map
$$
F^2\CH^2(\Sym^{g-1}(C);\QQ)  \hookrightarrow F^2\CH^3(J(C);\QQ)
$$
 Now observe that 
 $$
 F^2\CH^2(\Sym^{g-1}(C);\QQ) = F^2\biggl(  \frac{\CH^2(\Sym^{g-1}(C);\QQ)}{\ker(q_{\Theta*})}\biggr).
 $$
 This is because $\ker(q_{\Theta*})$ is supported on $\Theta-B$, and
 $$
 q_{\Theta*}: \CH^2(\Sym^{g-1}(C);\QQ) \rightarrow \CH^2(\Theta;\QQ)
 $$
 is injective on  the first summand  $H.F^1\CH^2(\Sym^g(C);\QQ)$. 

 It now suffices to show that $F^2(H.\CH^1(E_W;\QQ))=0$, to conclude
$$
F^2\CH^2(\Theta;\QQ) \hookrightarrow F^2\CH^2(J(C);\QQ)
$$
is injective, for $g\geq 4$ and $C$ non-hyperelliptic. 

 \end{proof}

\begin{lemma}
$$
F^2\CH_{g-2}(E_W;\QQ) =0
$$
\end{lemma}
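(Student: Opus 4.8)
The plan is to notice that $E_W$ has dimension $g-1$, so that $\CH_{g-2}(E_W;\QQ) = \CH^1(E_W;\QQ)$ is a group of \emph{divisor} classes; and for divisors the Abel--Jacobi map is injective on the homologically trivial part, which forces the $F^2$-piece to vanish. All the genuine content is in installing the correct smooth model. First I would record the geometry via Lemma \ref{BN}: the map $q:\Sym^g(C)\to J(C)$ is the blow-up of the codimension-two center $W_g^1$, whose singular locus $W_g^2$ satisfies $\codim(W_g^2,W_g^1)\geq 2$, and over the smooth locus $W_g^1-W_g^2$ (where $h^0=2$) the exceptional divisor restricts to a $\PR^1$-bundle $p:E_W^0\to W_g^1-W_g^2$. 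Thus $E_W^0$ is a smooth quasi-projective variety of dimension $g-1$, and $\CH_{g-2}(E_W^0;\QQ)=\CH^1(E_W^0;\QQ)$. Following the convention of \S\ref{conv} (exactly as the groups for the singular $\Theta$ are computed on $\Theta-B$), I would compute $\CH_{g-2}(E_W;\QQ)$, its filtration, and its Abel--Jacobi map on this smooth locus $E_W^0$.

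The core step is then to apply Lemma \ref{JLlemma} directly to $U=E_W^0$, which gives that
$$
AJ : F^1\CH^1(E_W^0;\QQ) \xrightarrow{\ \sim\ } IJ\big(H^1(E_W^0;\QQ)\big)
$$
is an isomorphism. By definition $F^2\CH^1(E_W^0;\QQ)$ is the kernel of this map, hence is zero, and transporting this through the identifications above yields $F^2\CH_{g-2}(E_W;\QQ)=0$, which is the assertion. If one prefers an intrinsic check rather than a black-box appeal, the projective bundle formula identifies $H^1(E_W^0)=p^*H^1(W_g^1-W_g^2)$ and $\CH^1_{\hom}(E_W^0;\QQ)=p^*\CH^1_{\hom}(W_g^1-W_g^2;\QQ)$ --- the only new divisor class $\xi=c_1(\mathcal{O}(1))$ being non-homologous to zero --- so that $AJ$ for $E_W^0$ is $p^*$ applied to the Abel--Jacobi map for divisors on the smooth base, which is injective.

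The main obstacle is the bookkeeping at the singular stratum, i.e. justifying that passing to $E_W^0$ loses no $F^2$-class. Concretely, one must check that a homologically trivial, $AJ$-trivial divisor class on $E_W$ whose restriction to $E_W^0$ vanishes is already zero. Here the non-hyperelliptic hypothesis enters through Lemma \ref{BN}: since $\codim(W_g^2,W_g^1)\geq 2$ we have $\dim W_g^2\leq g-4$, whence $\dim q^{-1}(W_g^2)\leq g-2$, so the discarded locus $q^{-1}(W_g^2)$ can contribute to $\CH_{g-2}$ only the fundamental classes of its top-dimensional (codimension one) components. These effective classes are non-homologous to zero, hence cannot lie in $F^1\supseteq F^2$, so they do not interfere with the kernel computation. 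Granting this --- together with the purity it provides, exactly as in \S\ref{generalAJ}--\S\ref{conv} --- the reduction to Lemma \ref{JLlemma} is clean, and the reason the statement holds is ultimately the elementary fact that for divisors the second step of the Bloch--Beilinson filtration is trivial.
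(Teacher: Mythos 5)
Your core reduction coincides with the paper's: both arguments ultimately rest on Lemma \ref{JLlemma} applied across the $\PR^1$-bundle structure of the exceptional divisor over the smooth locus of $W^1_g$. The paper first pushes divisor classes down to the base, writing $\CH_{g-2}(E_W;\QQ)=\CH_{g-2}(W^1_g;\QQ).h\oplus \CH_{g-3}(W^1_g;\QQ)$, so that $F^1\CH_{g-2}(E_W;\QQ)=F^1\CH^1(W^1_g;\QQ)$, and then invokes the Abel--Jacobi isomorphism on $W^1_g-\Sing(W^1_g)$; you invoke the same isomorphism upstairs on $E_W^0$. Since $F^1\CH^1(E_W^0;\QQ)=p^*F^1\CH^1(W^1_g-W^2_g;\QQ)$ and $H^1(E_W^0)=p^*H^1(W^1_g-W^2_g)$, the two routes are equivalent, so that difference is cosmetic.

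The genuine gap is in your third paragraph, where you pass from $E_W$ to $E_W^0$. First, the bound $\dim q^{-1}(W^2_g)\le g-2$ does not follow from $\dim W^2_g\le g-4$: the fibres of $q$ over $W^2_g$ are $\PR^{h^0-1}$ with $h^0\ge 3$, and nothing you cite bounds their dimension by $2$; one must stratify by $h^0$ and control each stratum. Second, and more seriously, the kernel of $\CH_{g-2}(E_W;\QQ)\to \CH_{g-2}(E_W^0;\QQ)$ is the $\QQ$-span of the classes $[Z_i]$ of all $(g-2)$-dimensional components of $q^{-1}(W^2_g)$, so the class you must kill is a combination $\sum_i a_i[Z_i]$ with coefficients of arbitrary signs. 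Positivity (``effective classes are non-homologous to zero'') only excludes the case where all $a_i$ have one sign; a difference $[Z_1]-[Z_2]$ of two distinct components can perfectly well be homologically trivial, and if it were also $AJ$-trivial it would be a potentially nonzero element of $F^2\CH_{g-2}(E_W;\QQ)$ invisible to your argument --- the lemma would then demand an actual rational equivalence, which you have not produced. The repair is to sharpen the dimension count so that this case never arises: for non-hyperelliptic $C$, residuation gives $W^r_g\cong W^{r-1}_{g-2}$, and Martens' theorem gives $\dim W^{r-1}_{g-2}\le g-2r-1$ for $r\ge 2$, so that $\dim q^{-1}(W^2_g)\le \max_{r\ge 2}\bigl((g-2r-1)+r\bigr)= g-3$; hence $\CH_{g-2}(q^{-1}(W^2_g);\QQ)=0$, the restriction to $E_W^0$ is an isomorphism on $\CH_{g-2}$, and your appeal to Lemma \ref{JLlemma} then finishes the proof. (Alternatively, adopting the convention of \S\ref{conv} --- defining the filtration on a singular variety through its smooth open part --- makes the bookkeeping step unnecessary.)
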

\begin{proof} Now $\dim(E_W)=g-1$, which is a bundle of projective spaces over $W$.

Using Lemma \ref{BN} and $\codim(W)=2$ in $J(C)$, (for a hyperplane class $h$ on $E_W$) we can write:
$$
\CH_{g-2}(E_W;\QQ) = \CH_{g-2}(W;\QQ).h \oplus \CH_{g-3}(W;\QQ) \,\,(\rm{modulo \,the\, image}\, \CH_{g-2}(E_{W^2_g};\QQ)). 
$$
Since $dim(E_{W^2_g})\leq g-2$, $F^1\CH_{g-2}(E_{W^2_g};\QQ)=0$.
 
Restricting to $F^1$-terms gives:
$$
F^1\CH_{g-2}(E_W;\QQ) =  F^1\CH_{g-3}(W)= F^1\CH^1(W;\QQ).
$$
Furthermore,  $\codim(Sing(W)) \geq 2$ (see Lemma \ref{BN},1)).

Now we are reduced to the case 1) situation when $l=1$. Namely, there is an 
Abel-Jacobi map $W- Sing(W)$, which is an isomorphism onto $IJ(H^1(W-Sing(W)))$ (see Lemma \ref{JLlemma}).  
This shows the kernel of the Abel-Jacobi map is trivial, and $F^2 \subseteq \ker AJ$, one has that
$F^2=0$. This suffices to conclude the proof. 
 \end{proof}



\begin{thebibliography}{AAAAA}

\bibitem[ACGH]{ACGH} 
 E. Arbarello, M. Cornalba, P. Griffiths, J. Harris, {\em Geometry of algebraic curves. Vol. I.} Grundlehren der 
 Mathematischen Wissenschaften [Fundamental Principles of Mathematical Sciences], 267. Springer-Verlag, New York, 1985. xvi+386 pp.
 
 \bibitem[AS]{AS} M. Asakura, {\em Motives and  algebraic de Rham cohomology.} In: Gordon, B., Lewis, J., M\"uller-Stach, S.,
Saito, S., Yui, N. (eds.) The Arithmetic and Geometry of Algebraic Cycles, Proceedings of the CRM
Summer School, 7-19 June, 1998, Banff, Alberta. NATO Science Series, vol. {\bf 548}. Kluwer Academic
Publishers, Berlin (2000)

\bibitem[BG]{BG} K. Banerjee and V. Guletskii, {\em Rational equivalence for line configurations
    on cubic hypersurfaces in $\mathbb P^5$.}, {\small \tt arXiv:1405.6430v1}, 2014.


\bibitem[Be]{Beau} A. Beauville, {\em Sur l'anneau de Chow d'une vari\'et\'e ab\'elienne}, Mathematische Annalen, 1986.

\bibitem[BL]{BL} C. Birkenhake and H. Lange, {\em Complex abelian varieties}, Springer-Verlag Berlin, Heidelberg, New York, 2003.

\bibitem[Bl]{Bloch} S. Bloch, {\em Algebraic cycles and higher K-theory}, Advances in Mathematics, \textbf{61}, 267-304, 1986.

\bibitem[Co]{Collino} A. Collino, {\em The rational equivalence ring of symmetric product of curves}, Illinois journal of mathematics,19,no.4, 567-583, 1975.

\bibitem[C-L]{C-L} X. Chen and J. D. Lewis, {\em The Hodge-${\mathcal D}$-conjecture for K3 and Abelian surfaces,} J. Algebraic Geom.
{\bf 14}(2), 213-240 (2005).

\bibitem[Fu]{Fulton} W. Fulton, {\em Intersection theory}, Ergebnisse der Mathematik und ihrer Grenzgebiete (3),  2.
      Springer-Verlag, Berlin, 1984.

\bibitem[Ha] {Hartshorne} R. Hartshorne, {\em Algebraic geometry}, Graduate Texts in Mathematics, No. 52. Springer-Verlag,
      New York-Heidelberg, 1977.
      
\bibitem[Ja]{Ja} U. Jannsen, {\em Motivic sheaves and filtrations on Chow groups,} Proc. Symposia in Pure Math.,Vol. {\bf 55} (1994),
245--302.
      
\bibitem[JL]{JL} J. D. Lewis, {\em A filtration on the Chow groups of a complex projective variety,} Compositio Math. {\bf 128},
(2001), 299--322.
      
 \bibitem[Mu]{Mu}  Munoz-Porras, {\em On the structure of the birational Abel morphism}, Math. Ann. 281 (1988), no. 1, 1--6.
       
\bibitem[No]{Nori} M.V. Nori, {\em Algebraic cycles and Hodge-theoretic connectivity}, Invent. Math. 111 (1993), no. \textbf{2}, 349--373.

\bibitem[Pa]{Paranjape}  Paranjape, {\em Cohomological and cycle-theoretic connectivity}, Ann. of Math. (2) 139 (1994), no. \textbf{3}, 641--660.

\bibitem[Vo]{Voisin} C. Voisin, {\em Complex algebraic geometry and Hodge theory II}, Cambridge University Press, Cambridge, 2003.


\end{thebibliography}
\end{document}